\def\titlerunning#1{\gdef\titrun{#1}}
\def\author#1{\gdef\autrun{\def\and{\unskip, }#1}\gdef\@author{#1}}
\def\address#1{{\def\and{\\\hspace*{18pt}}\renewcommand{\thefootnote}{}%
\footnote {#1}}%
\markboth{\autrun}{\titrun}}
\def\email#1{e-mail: #1}
\def\subjclass#1{{\renewcommand{\thefootnote}{}%
\footnote{\emph{Mathematics Subject Classification (2010):} #1}}}
\def\keywords#1{\par\medskip
\noindent\textbf{Keywords.} #1}
\newtheorem{thm}{Theorem}[section]
\newtheorem{cor}[thm]{Corollary}
\newtheorem{lem}[thm]{Lemma}
\newtheorem{proposition}[thm]{Proposition}
\theoremstyle{definition}
\newtheorem{defin}[thm]{Definition}
\newtheorem{rem}[thm]{Remark}
\newtheorem{example}[thm]{Example}
\numberwithin{equation}{section}
\newcommand{\Hcal}{{{\mathcal{H}}}}
\newcommand{\integ}{\ensuremath{{\hbox{\tiny{int}}}}}
\newcommand{\Tt}{\operatorname{{\mathbb{T}}}}
\newcommand{\tU}{{\widetilde{\U}}}
\newcommand{\Ind}{Ind}
\newcommand{\Ker}{\operatorname{Ker}}
\newcommand{\Coker}{\operatorname{Coker}}
\newcommand{\Hom}{\operatorname{Hom}}
\newcommand{\End}{\operatorname{End}}
\newcommand{\Ann}{\operatorname{Ann}}
\newcommand{\Aut}{\operatorname{Aut}}
\newcommand{\MOD}{\ensuremath{{\operatorname{Mod}}}}
\newcommand{\mmOD}{\ensuremath{{\operatorname{mod}}}}
\newcommand{\Z}{\ensuremath{\operatorname{Z}}}
\newcommand{\isoto}{\ensuremath{\overset{\sim}{\longrightarrow}}}
\newcommand{\C}{\ensuremath{{\mathbb{C}}}}
\newcommand{\D}{\ensuremath{\mathcal{D}}}
\newcommand{\B}{\ensuremath{\mathcal{B}}}
\newcommand{\Q}{\ensuremath{\mathcal{Q}}}
\newcommand{\Po}{\ensuremath{\mathcal{P}}}
\newcommand{\N}{\ensuremath{\mathcal{N}}}
\newcommand{\g}{\ensuremath{\mathfrak{g}}}
\newcommand{\bo}{\ensuremath{\mathfrak{b}}}
\newcommand{\n}{\ensuremath{\mathfrak{n}}}
\newcommand{\uo}{\ensuremath{\mathfrak{r}}}
\newcommand{\lo}{\ensuremath{\mathfrak{l}}}
\newcommand{\ko}{\ensuremath{\mathfrak{k}}}
\newcommand{\p}{\ensuremath{\mathfrak{p}}}
\newcommand{\qo}{\ensuremath{\mathfrak{q}}}
\newcommand{\h}{\ensuremath{\mathfrak{h}}}
\newcommand{\w}{\ensuremath{\widehat}}
\newcommand{\wt}{\ensuremath{\widetilde}}
\newcommand{\Loc}{\ensuremath{\mathcal{L}}}
\newcommand{\BGG}{\ensuremath{\mathcal{O}}}
\newcommand{\BGGcat}{\ensuremath{\operatorname{O}}}
\newcommand{\pPB}{\ensuremath{\pi^\mathcal{P}_\mathcal{B}}}
\newcommand{\pQP}{\ensuremath{\pi^\mathcal{Q}_\mathcal{P}}}
\newcommand{\pQPls}{\ensuremath{\pi^\mathcal{Q}_{\mathcal{P}*}}}
\newcommand{\pQPus}{\ensuremath{\pi^{\mathcal{Q}*}_{\mathcal{P}}}}
\newcommand{\pQPt}{\ensuremath{\widetilde{\pi}^\mathcal{Q}_\mathcal{P}}}
\newcommand{\pQPtls}{\ensuremath{\widetilde{\pi}^\mathcal{Q}_{\mathcal{P}*}}}
\newcommand{\fin}{\ensuremath{{\hbox{\tiny{fin}}}}}
\newcommand{\Zfin}{\ensuremath{{\operatorname{Z}\hbox{\text{-}{\tiny{fin}}}}}}
\newcommand{\Zlfin}{\ensuremath{{\operatorname{Z}(\lo)\hbox{\text{-}{\tiny{fin}}}}}}
\newcommand{\W}{\ensuremath{\mathcal{W}}}
\newcommand{\Ugl}{\ensuremath{\operatorname{U}^\lambda}}
\newcommand{\U}{\ensuremath{\operatorname{U}}}
\newcommand{\Ul}{\ensuremath{\operatorname{U}^\lambda}}
\newcommand{\Ug}{\ensuremath{\operatorname{U}}}
\newcommand{\DtP}{\ensuremath{\widetilde{\mathcal{D}}_{\mathcal{P}}}}
\newcommand{\DtQ}{\ensuremath{\widetilde{\mathcal{D}}_{\mathcal{Q}}}}
\newcommand{\DtB}{\ensuremath{\widetilde{\mathcal{D}}_{\mathcal{B}}}}
\newcommand{\DtXL}{\ensuremath{\widetilde{\mathcal{D}}_{X/K}}}
\newcommand{\DPl}{\ensuremath{{\mathcal{D}}^{\lambda}_{\mathcal{P}}}}
\newcommand{\DBl}{\ensuremath{{\mathcal{D}}^{\lambda}_{\mathcal{B}}}}
\newcommand{\ot}{\ensuremath{\otimes}}
\newcommand{\gP}{\widetilde{\g}_\mathcal{P}}
\newcommand{\gPl}{\widetilde{\g}_\mathcal{P}^\lambda}
\newcommand{\gB}{\widetilde{\g}_\mathcal{B}}
\newcommand{\gQ}{\widetilde{\g}_\mathcal{Q}}
\newcommand{\Qo}{\mathcal{Q}}
\begin{document}

%%%%% To ease editing, add:

\baselineskip=17pt

%%%%%%%%%%%%%%%%

%% In the running head, give an abbreviation of the title.
\titlerunning{Singular Localization of $\mathfrak{g}$-modules}

\title{Singular Localization of $\mathfrak{g}$-modules and applications to representation theory}

\author{Erik Backelin
and Kobi Kremnizer \\ {\bf Accepted for publication in Journal of
the EMS}}

\date{}

\maketitle

\address{Erik Backelin: Departamento de Matem\'{a}ticas, Universidad de los Andes,
Carrera 1 N. 18A - 10, Bogot\'a, Colombia; \email{erbackel@uniandes.edu.co}
\and
Kobi Kremnizer: Mathematical Institute, University of Oxford, 24–29 St Giles'
Oxford OX1 3LB, UK; \email{kremnitzer@maths.ox.ac.uk}}

\subjclass{Primary 17B10, 14-XX}

%%%%%%%%

\begin{abstract}
We prove a singular version of
Beilinson-Bernstein localization for a complex semi-simple Lie
algebra following ideas from the positive characteristic case
done by  \cite{BMR2}. We apply this theory to  translation functors, singular blocks in the Bernstein-Gelfand-Gelfand category $\BGGcat$ and Whittaker modules.

%% Keywords are optional
\keywords{Lie algebra, Beilinson-Bernstein localization, category $\BGGcat$}
\end{abstract}

\section{Introduction}
\subsection{} Let $\g$ be a semi-simple complex Lie algebra with enveloping algebra $\U$ and center $\Z \subset \U$.
Let $\h \subset \g$ be a
Cartan subalgebra and $\B$ be the flag manifold of $\g$. Let
$\lambda \in \h^*$ be regular and dominant and $I_\lambda \subset \Z$
be the corresponding maximal ideal determined by the Harish Chandra homomorphism.  Put $\Ugl :=
\U/ (I_\lambda)$.
Let $\DBl$ be the sheaf of
$\lambda$-twisted differential operators on $\B$. The celebrated
localization theorem of Beilinson and Bernstein, \cite{BB81},
states that the global section functor gives an equivalence
$\MOD(\DBl) \cong \MOD(\Ugl)$. For applications and more
information, see \cite{HTT08}.

A localization theory for singular $\lambda$ was much later found in positive characteristic by Bezrukavnikov,  Mirkovi\'c  and Rumynin, \cite{BMR2}.
Let us sketch their basic construction (which makes sense in all
characteristics):

Let $G$ be a semi-simple algebraic group such that $Lie \, G = \g$.
Instead of $\B$ consider a parabolic flag manifold $\Po = G/P$,
where $P \subseteq G$ is a parabolic subgroup whose parabolic
roots coincide with the singular roots of $\lambda$. Replace the
sheaf $\DBl$ by a sheaf $\DPl := \pi_*( \D_{G/R})^{L}$ modulo a
certain ideal defined by $\lambda$. Here $L$ is the Levi factor
and $R$ is the unipotent radical of $P$ and $\pi: G/R \to \Po$ is
the projection. The $L$-invariants are taken with respect to the
right $L$-action on $G/R$. The sheaf $\pi_*( \D_{G/R})^{L}$ is
locally isomorphic to $\D_\Po \otimes \U(\lo)$, where $\lo = Lie
\, L$. When $P=B$ we have $\DPl = \DBl$ and when $P=G$ we arrive
at a tautological solution: $\DPl =$ $\Ugl \ot$  ``sheaf of
differential operators on a point" $= \Ugl$.

\medskip

\noindent
We use this construction to prove a singular
localization theorem in characteristic zero, Theorem
\ref{singlocthmtag0}.
This is probably well known to the experts but it isn't
in the literature. Our proof is similar to the original proof
of \cite{BB81}, though parabolicity leads to some new complications. For instance,
\cite{BB81} introduced the method of tensoring a $\D_\B$-module with a trivial bundle and then to filter
this bundle with $G$-equivariant line bundles as  subquotients. In the parabolic setting the subquotients will necessarily be vector bundles - which are harder to control -
since irreducible representations of $P$ are generally not one-dimensional.

\smallskip

In Theorem \ref{thmtag0}  we show that global section $\Gamma(\DPl)$ equals $\Ul$ by passing to the associated graded level,
i.e. to the level of a
parabolic Springer resolution. That this works we deduce from the usual Springer resolution, Lemma \ref{classicalglobal}.

Our localization theorem gives an equivalence at the level of
abelian categories just like \cite{BB81} does.
This is different from positive characteristic where the
localization theorem only holds at the level of derived
categories.

\subsection{}
Our principal motivation comes from quantum groups.  We do not wish to get into details here, but let us at least mention that we will need a singular localization theory for quantum groups
in order to establish quantum analogs of fundamental constructions  from  \cite{BMR, BMR2, BM10} that relate modular representation theory to (commutative) algebraic geometry.
By our previous work, \cite{BK08},  we know that the derived representation categories of quantum groups at roots of unity
are equivalent to derived categories of coherent sheaves on Springer fibers in $T^*\B$.

To extend this to the level of abelian categories we must transport the tautological $t$-structure
on the representation theoretical derived category to a $t$-structure on the coherent sheaf side. It so happens that to describe this so called \textit{exotic} $t$-structure (see also \cite{Bez06}) a family of singular
localizations is needed (even for a regular block).

\smallskip

We showed in \cite{BK06} that
a localization theory for quantum groups can be neatly formulated in terms of equivariant sheaves.
The
``space" $G/B$ doesn't admit a quantization.  However, one can quantize function algebras $\BGG(G)$ and
$\BGG(B)$ and thus the category of $B$-equivariant (= $\BGG(B)$-coequivariant)
$\BGG(G)$-modules. This is just the category of quasicoherent sheaves on $G/B$.  Therefore, to prepare for the quantum case we have taken thorough care to write down our results in an equivariant categorical language and at the same time
to explain what is going on geometrically while this is still possible.

\subsection{}
The theory of singular localization of $\g$-modules clarifies many aspects of representation theory and will have many applications in its own right.  Here we discuss a few of them.

\smallskip

It is a basic principle in representation theory that understanding of representations at singular central characters enhances the understanding also at regular central characters.
This is illustrated by our $\D$-module interpretation of translation functors (Section \ref{Translation functors}).
Using regular localization only, such a theory was developed by Beilinson and Ginzburg, \cite{BeGin99}.
Singular localization simplifies their picture for the plain reason that wall-crossing functors between regular blocks factors through a singular block.
We shall also need these results in our work on quantum groups.

\smallskip

The localization theorem implies that a (perhaps singular) block $\BGGcat_\lambda$ in
category $\BGGcat$ corresponds to
certain bi-equivariant $\D$-modules on $G$ (Section
\ref{section O}). From this we directly retrieve Bernstein and Gelfand's, \cite{BerGel} , classic result that $\BGGcat_\lambda$ is equivalent to a category of Harish-Chandra bimodules, Corollary \ref{singparcor}.

Singular localization also leads to the useful observation that one should study Harish-Chandra $\g\text{-}\lo$-bimodules,
where $\lo$ is the Levi factor of $\p = Lie \, P$, rather than $\g\text{-}\g$-bimodules (as well as the only proof we know that such bimodules are equivalent to $\BGGcat_\lambda$.)
For instance, Theorem \ref{MainWhittakerTheorem} gives this way a very short proof for  Mili\v ci\'c and Soergel's equivalence between
$\BGGcat_\lambda$ and a block in the category of Whittaker modules, \cite{MS97}, and Corollary \ref{parabolic Whittaker cor} gives one for its parabolic generalization due to Webster, \cite{W09}.
These Whittaker categories have encountered recent interest because they are equivalent to modules over finite $W$-algebras, e.g. \cite{W09}.  It is probably well worth the effort to further investigate the
relationship between singular localization and finite $W$-algebras; in particular so in the affine case.

We also retrieve and generalize some other known equivalences between representation categories,  e.g. \cite{Soe86}.

\subsection{}
An interesting task will be to develop a theory for ``holonomic" $\DPl$-modules. Those which are ``smooth along the Bruhat stratification of $\Po$" and have ``regular singularities"
will correspond to $\BGGcat_\lambda$. One should then establish a ``Riemann-Hilbert correspondence" between holonomic $\DPl$-modules with regular singularities and a suitable category
of constructible sheaves on $\Po$. Ideally the latter category would be accessible to the machinery of Hodge theory. This would further strengthen the interplay between representation theory and algebraic topology.
Because of the simple local description of $\DPl$ we believe that all this can be done and is a good starting point for generalizing $\D$-module theory. We shall return to this topic later on.

\smallskip

Another topic we would like to approach via singular localization is the singular-parabolic Koszul duality for $\BGGcat$ of \cite{BGS}.

%You can use this file as a template when submitting your paper to JEMS.

%The format of the file is \textbf{not} the exact final printed format (the latter is scaled down), but it is convenient for editing purposes.

%Put all acknowledgments (including those concerning grants) just before references.

\section{Preliminaries}
Here we fix notations and collect mostly well known results that
we shall need.
\subsection{Notations}\label{notations} We work over $\C$. Unless stated otherwise, $\ot = \ot_\C$. Let $X$ be an algebraic
variety, $\BGG_X$ the sheaf of regular functions on $X$ and
$\BGG(X)$ its global sections. $\MOD(\BGG_X)$ denotes the category
of quasi-coherent sheaves on $X$ and $\Gamma := \Gamma_X:
\MOD(\BGG_X) \to \MOD(\BGG(X))$ is the global section functor.
If $Y$ is another variety $\pi^Y_X$ will denote the obvious projection
$X \to Y$ if there is a such.

\smallskip

For $\mathcal{A}$ a sheaf of algebras on $X$  such that $\BGG_X
\subseteq \mathcal{A} $ (e.g., an algebra if $X = pt$) we
abbreviate an $\mathcal{A}$-module for a sheaf of
$\mathcal{A}$-modules that is quasi-coherent over $\BGG_X$. We
denote by $\MOD(\mathcal{A})$ the category of $\mathcal{A}$-modules.
More generally, we will encounter categories such as
$\MOD(\mathcal{A},\, additional \, data)$ that consists of
$\mathcal{A}$-modules with some $additional \, data$. We will then
denote by $\mmOD(\mathcal{A},\, additional \, data)$ its full
subcategory of noetherian objects.

\smallskip

Throughout this paper $G$ will denote a semi-simple complex linear algebraic group. We have assumed semi-simplictly to simplify notations;
all our results can be straightforwardly extended to the case that $G$ is reductive. We remark on this fact in those proofs that reduce to (reductive) Levi subgroups of $G$.

\subsection{Root data}
Let $B \subset
G$ be a Borel subgroup of our semi-simple group $G$ and let $T \subset B$ be a maximal torus. Let $\h
\subset \bo \subset \g $ be their respective Lie algebras. For any
parabolic subgroup $P$ of $G$ containing $B$, denote by $R =
R_P$ its unipotent radical and by $L := L_P$ its Levi subgroup and
by $\p = Lie \, P$, $\uo = \uo_P = Lie \, R$ and $\lo = \lo_P = Lie \, L$ their Lie algebras. We denote
by $\B := G/B$ the flag manifold and by $\Po := G/P$ the parabolic
flag manifold corresponding to $P$.

Let $\Lambda$ be the lattice of integral weights and let
$\Lambda_r$ be the root lattice. Let $\Lambda_+$ and $\Lambda_{r
+}$ be the positive weights and the positive linear combinations
of the simple roots, respectively.

Let  $\W$ be the Weyl group of $\g$. Let $\Delta$ be the simple
roots and let $\Delta_P: = \{\alpha \in \Delta: \g^{-\alpha}
\subset \p\}$ be the subset of $P$-parabolic roots. Let $\W_P$ be
the subgroup of $\W$ generated by simple reflections $s_\alpha$,
for $\alpha \in \Delta_P$. Note that $\h$ is a Cartan subalgebra
of the reductive Lie algebra $\lo_P$. Denote by $S(\h)^{\W_P}$ the
$\W_P$-invariants in $S(\h)$ with respect to the $\bullet$-action
(here $w \bullet \lambda := w(\lambda + \rho) - \rho$, for
$\lambda \in \h^*$, $w \in \W$, $\rho$ is the half sum of the
positive roots ).

Let $\Z(\lo)$ be the center of $\U(\lo)$ and put $\Z := \Z(\g)$. We have the Harish-Chandra homomorphism
$S(\h)^{\W_P} \cong \Z(\lo)$ (thus
$S(\h)^\W \cong \Z$).

Put $\Delta_\lambda := \{\alpha \in \Delta; \lambda(H_\alpha) =
-1\}$, $\lambda \in \h^*$, where $H_\alpha \in \h$ is the coroot
corresponding to $\alpha$. Let $\chi_{\lo,\lambda}: \Z(\lo)
\to \C$ be the character such that $I_{\lo,\lambda} := \Ker \chi_{\lo,\lambda}$
annihilates the Verma module $M_{\lambda}$ (for $\U(\lo)$) with highest weight
$\lambda$. Thus, $\chi_{\lo,\lambda} = \chi_{\lo,\mu} \iff \mu
\in \W_P \bullet \lambda$. We have $\lambda = \chi_{\h,\lambda}$
and we write $\chi_\lambda := \chi_{\g,\lambda}$ and $I_\lambda := \Ker \chi_\lambda$.

Let $\U := \U(\g)$ be the enveloping algebra of $\g$ and $\tU
:= \U \ot_{\Z} S(\h)$ the extended enveloping algebra; thus
$\tU$ has a natural $\W$-action such that the invariant ring
${\tU}{}^\W$ is canonically isomorphic to $\U$. Let $\Ul :=
\U/(I_\lambda)$. We say that
     \begin{itemize}
\item $\lambda \in \h^*$ is {\it $P$-dominant} if
$\lambda(H_\alpha) \notin \{-2,-3,-4, \ldots\}$, for $\alpha \in
\Delta_P$; $\lambda$ is dominant if it is $G$-dominant. \item
$\lambda$ is $P$-{\it regular} if $\Delta_\lambda \subseteq
\Delta_P$. $\lambda$ is regular if it is $B$-regular, that is if
$w \bullet \lambda = \lambda \implies w = e$, for $w \in \W$.
\item $\lambda$ is a $P$-\emph{character} if
it extends to a character of $P$; thus $\lambda$ is a
$P$-character iff $\lambda$ is integral and $\lambda |_{\Delta_P}
= 0$.

 \end{itemize}
Suppose now that $\lambda \in \h^*$ is integral and $P$-dominant.
Then there is an irreducible finite dimensional $P$-representation
$V_P(\lambda)$ with highest weight $\lambda$. Note that
$V_{L}(\lambda) := V_P(\lambda)$ is an irreducible representation
for $L$. Of course, $\dim V_P(\lambda) = 1 \iff \lambda$ is a
$P$-character.

The following is well-known:   \begin{lem}\label{regdomlem} Let
$\lambda \in \h^*$. Then $\lambda$ is dominant iff for all $\mu
\in \Lambda_{r +} \setminus \{0\}$ we have $\chi_{\lambda + \mu}
\neq \chi_\lambda$.
\end{lem}
We also have
\begin{lem}\label{Pregularweightlemma}  Let $\lambda \in \h^*$ be $P$-regular and dominant.
Let $\mu$ be a $P$-character and let $V$ be the finite dimensional
irreducible representation
         of $\g$ with extremal weight  $\mu$. Then for any weight $\psi$ of $V$, $\psi \neq \mu$,
         we have $\chi_{\lambda + \mu} \neq \chi_{\lambda +\psi}$.
         \end{lem}
\begin{proof} This is well known for $P = B$. We reduce to that case as
follows: Let $\g'$ be the semi-simple Lie subalgebra of $\g$
generated by $X_{\pm \alpha}$, $\alpha \in \Delta \setminus
\Delta_P$. Let $\h' := \g' \cap \h$ be the Cartan subalgebra of
$\g'$. The inclusion $\h' \hookrightarrow \h$ gives the projection
$p: \h^* \to {\h'}^*$. Consider the restriction $V \vert_{\g'}$ of
$V$ to $\g'$ and let $V'$ denote the irreducible $\g'$-module with
highest weight $p(\mu)$; $V'$ is a direct summand in $V
\vert_{\g'}$. Let $\Lambda(V)$ denote the set of weights of $V$.
Then $p(\Lambda(V)) = \Lambda'(V \vert_{\g'})$, the weights of $V
\vert_{\g'}$. By the assumption that $\mu$ is a $P$-character, it
follows that $p(\Lambda(V))$ is contained in the convex hull
$\overline{\Lambda'(V')}$ of $\Lambda'(V')$. Since $p(\lambda)$ is
regular and dominant it is well known that $p(\lambda + \mu)
\notin \W'(p(\lambda) +  \Lambda(V'))$. But
then it follows that $p(\lambda + \mu) \notin \W'(p(\lambda) +
 \overline{\Lambda(V')})$. Now $\W' = p(\W)$,
so it follows that $\lambda + \mu \notin \W(\lambda +  \Lambda(V))$.
\end{proof}

\subsection{Equivariant $\BGG$-modules and
induction}\label{section induction} See \cite{Jan2} for details on
this material.

Let $K$ be a linear algebraic group and $J$ a closed algebraic
subgroup. For $X$ an algebraic variety equipped with a right (or
left) action of $K$ we denote by $\MOD(\BGG_{X},K)$ the category
of $K$-equivariant sheaves of (quasi-coherent) $\BGG_X$-modules. For $M \in \MOD(\BGG_{X},K)$ there is
the sheaf $(\pi^{X/K}_{X*}M)^K$ on $X/K$ of $K$-invariant local sections in
the direct image $\pi^{X/K}_{X*}M$.
If the $K$-action is free and the quotient is nice we have the
equivalence
$$[\pi^{X/K}_{X*}( \ )]^K: \MOD(\BGG_{X},K) \to \MOD(\BGG_{X/K}): \pi^{X/K *}_X.$$

%Since $K$ is affine, we have Serre's equivalence $\MOD(\BGG_K)  \to
%\MOD(\BGG(K))$, $M \mapsto \Gamma_K(M)$, for $M \in \MOD(\BGG_K)$.
%
We denote by $\Gamma_{(K,J)}$ the global section functor on
$\MOD(\BGG_K,J)$ that corresponds to
$\Gamma_{K/J}$ under the equivalence
$\MOD(\BGG_K,J) \cong \MOD(\BGG_{K/J})$. Then $\Gamma_{(K,J)}(M) =
M^J$, for $M \in \MOD(\BGG_K,J)$. %(Note that $\BGG(K/J)
%= \BGG(K)^J$.)

\smallskip

\smallskip

\noindent Let $Rep(K)$ denote the category of algebraic
representations of $K$. We have $\BGG(K) \in Rep(K)$, via $(gf)(x)
:= f(g^{-1}x)$, for $g,x \in K$ and $f \in \BGG(K)$. We shall also
consider the left $J$-action on $\BGG(K)$ given by $(kf)(x) :=
f(xk)$, for $k \in J, x \in K$ and $f \in \BGG(K)$. These actions
commute.

For $V \in Rep( J)$ we consider the diagonal left $J$-action on
$\widetilde{V} := \BGG(K) \otimes V$. The left $K$-action on
$\BGG(K)$ defines a left $K$-action on $\widetilde{V}$ that
commutes with the $J$-action and the multiplication map $\BGG(K)
\otimes \widetilde{V} \to \widetilde{V}$ is $K$- and $J$-linear.
Thus $\widetilde{V}$ belongs to the category $\MOD(K, \BGG(K), J)$
of $K\text{-}J$ bi-equivaraint $\BGG(K)$-modules. This gives the
functor
$$p^*: Rep(J) \to \MOD(K, \BGG(K), J),\; V \mapsto \widetilde{V}$$
(induced bundle of a representation, $p$ symbolizes projection
from $K$ to $pt/J$).

Let $Ind^K_{J} V := \widetilde{V}^J \in Rep(K)$.

We have the factorization $Ind^K_{J} = ( \ )^J \circ p^*$.
 One can show that $R( \ )^J \circ p^*
\cong RInd^K_{J}$ where $R( \ )^J$ and $RInd^K_{J}$ are computed
in suitable derived categories. An important formula is the
\emph{tensor identity}
\begin{equation}\label{tensor identity}
RInd^{K}_{J} (V \otimes W) \cong RInd^{K}_{J} (V) \otimes W, \;
for \; V \in Rep(J), W \in Rep(K).
\end{equation}
(In particular $RInd^{K}_{J} (W) \cong W \ot RInd^K_J(\C)$, for $W
\in Rep(K)$ and $\C$ the trivial representations.)

\section{Parabolic Springer Resolutions}
In order to treat sheaves of extended differential operators on
parabolic flag varieties in the next section we will here gather
information about their associated graded objects. This is encoded
in the geometry of the parabolic Grothendieck-Springer resolution.
\subsection{Parabolic Flag Varieties}

The parabolic flag variety  $\Po$ has a natural left $G$-action. There is a bijection
between representations of $P$ and $G$-equivariant vector bundles
on $\Po$; a representation $V$ of $P$ correspond to the induced
bundle $G \times_P V$ on $\Po$. We denote by $\BGG(V) :=
\BGG_\Po(V)$ the corresponding locally free sheaf on $\Po$ which
hence has a left $G$-equivariant structure.

Let $\lambda \in \h^*$ be a $P$-character and write $\BGG(\lambda)
:= \BGG(V_P(\lambda))$ for the line-bundle corresponding to the
one-dimensional $P$-representation $V_P(\lambda)$. We have
$Pic(\Po)=Pic_G(\Po) \cong$ group of $P$-characters, (but note
that not all vector bundles on $\Po$ are $G$-equivariant). The
ample line bundles $\BGG(-\mu)$ are given by $P$-characters $\mu$
such that $\mu(H_\alpha) > 0$ for all $\alpha \in \Delta \setminus
\Delta_P$.

Next we define the parabolic Grothendieck resolution:
\begin{defin}\label{Grothresolution}
$\gP:=\{(P',x):P'\in \Po, x\in \g^*, x\vert_{\uo_{P'}}=0\}$
\end{defin}

Note that $\gP=G\times_P (\g/\uo_P)^*$. Recall that $L = L_P$ is
the Levi factor of $P$, $U = U_P$ its unipotent radical and $\lo =
\lo_P, \uo = \uo_P$ their Lie algebras. We have a commutative
square:

\begin{equation}\label{P diagram}
\begin{diagram}
\node{\gP} \arrow{e}  \arrow{s} \node{\lo^*/L=\h^*/\W_P} \arrow{s} \\
 \node{ \g^*} \arrow{e} \node{\h^*/\W}
\end{diagram}
\end{equation}
where the top map sends $(P',x)$ to
$x\vert\lo_{P'}/L_{P'}\in\lo_{P'}^*/L_{P'}\cong\lo^*/L$. Note that
the isomorphism $\lo_{P'}^*/L_{P'}\cong\lo^*/L$ is
canonical. (We can call $\lo^*/L$ the universal coadjoint
quotient of the Levi Lie subalgebra.)

This induces a map:
\begin{equation}
\pi_\Po:\gP\to \g^*\times_{\h^*/\W} \h^*/\W_P.
\end{equation}

\begin{lem}\label{classicalglobal}
$R\pi_{\Po*} \BGG_{\gP}=\BGG_{\g^*\times_{\h^*/\W} \h^*/\W_P}$.
\end{lem}
\begin{proof} We shall reduce to the well known case of the
ordinary Grothendieck resolution for $\Po = \B$. It states that
\begin{equation}\label{Grothendieck sheaves B}
R\pi_{\B*} \BGG_{\gB} = \BGG_{\g^*\times_{\h^*/\W} \h^*}.
\end{equation}
Translating this to the equivariant language it reads:
\begin{equation}\label{Grothendieck equivariant B}
RInd^G_B (S(\g/\n)) = S(\g) \otimes_{S(\h)^\W} S(\h).
\end{equation}
where $\n := [\bo,\bo]$. To see this, observe first that, since
$\g^*\times_{\h^*/\W} \h^*$ is affine, the equality
\ref{Grothendieck sheaves B} is after taking global sections
equivalent to the equality $$R\Gamma(\BGG_{\gB}) =
\BGG(\g^*\times_{\h^*/\W} \h^*) = S(\g) \otimes_{S(\h)^\W} S(\h)$$
of $G$-modules. Moreover, since the bundle projection $p: \gB \to
\B$ with fiber $(\g/\n)^*$ is affine, $p_*$ is exact and hence
$R\Gamma(\BGG_{\gB}) = R\Gamma(p_{*}(\BGG_{\gB}))$. Under the
identification $\MOD(\BGG_\B) = \MOD(\BGG_G,B)$ we have that
$p_{*}(\BGG_{\gB})$ corresponds to $S(\g/\n) \otimes \BGG(G)$ so
its derived global sections are given by $RInd^G_B (S(\g/\n))$ as
stated. This proves \ref{Grothendieck equivariant B}.

By a similar argument the statement of the lemma is equivalent to
proving that
\begin{equation}\label{Grothendieck equivariant B234}
R Ind^G_P (S(\g/\uo)) = S(\g) \otimes_{S(\h)^\W} S(\h)^{\W_P}.
\end{equation}
For any $M \in \MOD(B)$ we have an equality of $P$-modules
\begin{equation}\label{general equality}
 R \Ind^P_B(M) =  R \Ind^{L}_{L \cap B}(M).
\end{equation}
where the $R$-module structure on the RHS is defined by $(xf)(g)
:= g^{-1}xg \cdot f(g)$ for $f \in Mor(L,M)^{L \cap B} \cong
\Ind^{L}_{L \cap B}(M)$, $x \in U$, $g \in L$. Together with the
given $L$-action this makes the RHS a $P$-module. In particular we
have
\begin{equation}\label{some third step}
 R \Ind^P_B(S(\g / \n)) =  R \Ind^{L}_{L \cap B}(S(\g / \n)).
\end{equation}
We have a decomposition $\g = \overline{\uo}_P \oplus \lo \oplus
\uo$, where $\overline{\uo}_P$ is the image of  $\uo$ under the
Chevalley involution of $\g$; thus $\g/\n = \lo/(\lo \cap \n)
\oplus \overline{\uo}_P$. Thus
\begin{equation}\label{some other step}
R \Ind^{L}_{L \cap B}(S(\g / \n)) = R \Ind^{L}_{L \cap B}(S(\lo/
\lo \cap \n) \otimes S(\overline{\uo}_P)) =
\end{equation}
$$R
\Ind^{L}_{L \cap B}(S(\lo/ \lo \cap \n)) \otimes
S(\overline{\uo}_P) = S(\g/\uo) \otimes_{S(\h)^{\W_P}} S(\h)$$
where the last equality is given by \ref{Grothendieck equivariant
B} applied to $G$ replaced by $L$ and the second equality is the
tensor identity which applies since $S(\overline{\uo}_P)$ is an
$L$-module. Since $R Ind^G_B = R Ind^G_P \circ R Ind^P_B$ we get
from \ref{Grothendieck equivariant B}, \ref{some third step} and
\ref{some other step} that
$$
 S(\g) \otimes_{S(\h)^\W} S(\h) = R Ind^G_P (S(\g/ \uo) \otimes_{S(\h)^{\W_P}} S(\h)) = R Ind^G_P (S(\g/ \uo)) \otimes_{S(\h)^{\W_P}}
 S(\h).
$$
Since $S(\h)$ is faithfully flat over $S(\h)^{\W_P}$ this implies
\ref{Grothendieck equivariant B234}.
\end{proof}

Let $P\subset Q$ be two parabolic subgroups. The projection $\pQP:
\Po\to\Q$ induces a map $\pQPt:\gP \to \gQ$ that fits into the
following commutative square:

\begin{equation}\label{P2 diagram}
\begin{diagram}
\node{\gP} \arrow{e}  \arrow{s,r}{\pQPt} \node{\lo^*/L=\h^*/\W_P} \arrow{s} \\
 \node{ \gQ} \arrow{e} \node{\lo^*_Q/L_Q=\h^*/\W_Q}
\end{diagram}
\end{equation}
With similar arguments as in the proof of Lemma
\ref{classicalglobal} one can prove
\begin{lem}\label{relativeclassicalglobal}
$R {\pQPtls}  \BGG_{\gP}=\BGG_{\gQ\times_{\h^*/\W_Q} \h^*/\W_P}$.
\end{lem}

We observe that $\gP$ is an $L$-torsor over $T^*\Po$. We put
\begin{defin}
$\gPl=\gP\times_{\h^*/\W_P} \lambda$, for $\lambda \in \h^*$.
\end{defin}

We would like to view $\gPl$ as the classical Hamiltonian of
$T^*(G/R)$ with respect to the (right) $L$-action. We have a
moment map $\mu:T^*(G/R)\to \lo^*$. Recall that we can take the
Hamiltonian reduction with respect to any subset of $\lo^*$ stable
under the coadjoint action. Let $\mathcal{N}_\lambda\subset\lo^*$
be the preimage of $\lambda/\W_P\in\h^*/\W_P\cong  \lo^*_P/L$
under the quotient map. Then
\begin{equation}
T^*(G/R)//_{\mathcal{N}_\lambda} L=
\mu^{-1}(\mathcal{N}_\lambda)/L=\gPl.
\end{equation}

Note that we could also reduce with respect to $\lambda\in
(\lo^*)^{L}$ in which case we would get twisted cotangent bundles.

\section{Extended differential operators on $\Po$}
In this section we construct the sheaf of extended differential
operators on a parabolic flag manifold and describe its global
sections.
\subsection{Torsors} Let $X$ be an algebraic variety equipped with
a free right action of a linear algebraic group $K$ and let $p: X
\to X/K$ be the projection. We assume that $X$, locally in the
Zariski topology, is of the form $Y \times K$, for some variety
$Y$, and $p$ is first projection. Such $X$ is called an
$K$-torsor. We get induced right $K$-actions on the sheaf $\D_X$
of regular differential operators on $X$ and on the direct image
sheaf $p_*(\D_X)$. Denote by $\DtXL := p_*(\D_X)^K$ the sheaf on
$X/K$ of $K$-invariant local sections of $p_*(\D_X)$.

Let $\ko := Lie \, K$. The infinitesimal $K$-action gives algebra
homomorphisms $\hat{\epsilon}: U(\ko) \to \D_X$ and
$\tilde{\epsilon}: U(\ko) \to p_*\D_X$, which are injective since
the $K$-action is free. It follows from the definition of
differentiating a group action that $[\tilde{\epsilon}(U(\ko)),
\DtXL] = 0$.

Notice that $\tilde{\epsilon}(\U(\ko)) \nsubseteq \DtXL$, unless
$K$ is abelian, but $\tilde{\epsilon}(\Z(\ko)) \subseteq \DtXL$.
We denote by $\epsilon: \Z(\ko) \to \DtXL$ the restriction of
$\tilde{\epsilon}$ to $\Z(\ko)$. By the discussion above it is a
central embedding.

Now, using that $p$ is locally trivial we can give a local
description of $\DtXL$. Let $Y \times K$ be a Zariski open subset
of $X$ over which $p$ is trivial. Then $\D_X \vert_{Y \times K} =
\D_Y \otimes \D_K$ and $\DtXL \vert_{Y} = \D_Y \otimes \U(\ko)$,
where $\U(\ko)$ is identified with the algebra of right
$K$-invariant differential operators $\D^K_K$ on $K$.

Note that $\tilde{\epsilon}(\U(\ko)) \vert_{Y \times K} = 1 \ot
{}^K\D_K$ is the algebra of left $K$-invariant differential
operators on $Y \times K$, with respect to the natural left
$K$-action on $Y \times K$, that are constant along $Y$. Since
$\Z({}^K\D_K) = \Z(\D^K_K)$ we get that $\epsilon$ is locally
given by the embedding
$$
\Z(\ko) \hookrightarrow \U(\ko) \cong 1 \otimes \U(\ko)
\hookrightarrow \D_Y \otimes \U(\ko).
$$
This implies that $\epsilon(\Z(\ko)) = \Z(\DtXL)$.

Denote by $\MOD(\D_X,K)$ the category of weakly equivariant
$(\D_X,K)$-modules. In order to simplify the description of this
category we assume henceforth that $X$ is quasi-affine. Its object
$M$ is then a left $\D_X$-module equipped with an algebraic right
action $\rho := \{\rho_U\}$, where $\rho_U: K \to
\Aut_{\C_U}(M(U))^\mathrm{op}$ are homomorphism compatible with
the restriction maps in $M$, for each Zariski-open $K$-invariant
subset $R$ of $X$. We require that $\D_X \otimes M \to M$ is
$K$-linear (over $K$-invariant open sets) with respect to the
diagonal $K$-action on a tensor. (For a general $X$, $\rho$ must
be replaced by a given isomorphism $pr^*M \cong act^*M$ satisfying
a cocycle condition, where $pr$ and $act: X \times K \to X$ are
projection and the action map, respectively.)

Denote by $\MOD(\D_X,K,\ko)$ the category of strongly
equivariant $(\D_K,K)$-modules. Its object $(M, \rho)$ is a weakly
equivariant $(\D_X,K)$-module such that $d\rho(x)m =
\hat{\epsilon}(x)m$ for $x \in \ko$ and $m \in M$.

For $M \in \MOD(\D_X,K)$ we consider the sheaf $(p_* M)^K$
of $K$-invariant local sections in $p_* M$; it has a natural
$\DtXL$-module structure. Thus we get a functor $p_*$ whose right
adjoint is $p^*$ (the pullback in the category of $\BGG$-modules
with its natural equivariant structure). The following is standard
(see \cite{BB93}):
\begin{lem}\label{general D equivariance iso} The functors $i) \ p_* ( \ )^K: \MOD(\D_X,K) \leftrightarrows
\MOD(\DtXL): p^*$ and
$
ii) \ p_* ( \ )^K: \MOD(\D_X,K,\ko) \leftrightarrows
\MOD(\D_{X/K}): p^*
$
are mutually inverse equivalences of categories.
\end{lem}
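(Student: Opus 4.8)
The plan is to establish both equivalences by first treating the local, trivialized situation and then showing that the constructions are local in nature, so that gluing yields the global statement. Since $X$ is assumed quasi-affine and $p\colon X\to X/K$ is a $K$-torsor, we may cover $X/K$ by open affines $Y$ over which $X\cong Y\times K$ with $p$ the first projection; the point is that the functors $p_*(\ )^K$ and $p^*$ commute with restriction to such opens, so it suffices to produce the equivalences after restricting, and then check compatibility on overlaps.

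First I would handle part (i). Over a trivializing open $Y$ we have $\D_X|_{Y\times K}=\D_Y\otimes\D_K$ and $\DtXL|_Y=\D_Y\otimes\U(\ko)$, where $\U(\ko)$ is the ring $\D_K^K$ of right-$K$-invariant differential operators on $K$. The claim then reduces to the statement that weakly $(\D_K,K)$-equivariant modules on $K$ (with $K$ acting on itself on the right) are the same as $\D_K^K$-modules, tensored along $\D_Y$; concretely, for $M\in\MOD(\D_X,K)$ the sheaf $(p_*M)^K$ is a $\DtXL$-module, and conversely any $\DtXL$-module $N$ gives $p^*N=\BGG_X\otimes_{p^{-1}\BGG_{X/K}}p^{-1}N$ a weakly equivariant $\D_X$-module structure; one checks $(p_*p^*N)^K\cong N$ using that taking $K$-invariants of $\BGG_X=\BGG_{X/K}\otimes\BGG(K)$ recovers $\BGG_{X/K}$ (this is the $\BGG$-module torsor equivalence already recalled in Section \ref{section induction}), and $p^*(p_*M)^K\cong M$ using freeness of the action, i.e. that $M$ is generated over such a pullback. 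The key input is that $\D_X$ is generated over $p^*\DtXL$ together with the image $\hat\epsilon(\U(\ko))$ of the infinitesimal action, which is exactly the local statement $\D_X|_{Y\times K}=\D_Y\otimes\D_K$ and $\D_K=\U(\ko)\cdot{}^K\!\D_K$.

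For part (ii), I would observe that a strongly equivariant structure is precisely the extra datum that kills the freedom in the $K$-direction: the condition $d\rho(x)m=\hat\epsilon(x)m$ for $x\in\ko$ forces the equivariant $\D_X$-module to descend not merely to a $\DtXL$-module but to a genuine $\D_{X/K}$-module. Locally this is the familiar fact that a strongly $(\D_K,K)$-equivariant $\D_K$-module is the same as a vector space (a $\D_{pt}$-module): the strong equivariance identifies the two commuting copies of $\U(\ko)$ inside $\D_K$ (left-invariant operators from the action, right-invariant operators as differential operators) and thereby trivializes $\D_K$ down to $\C$. Tensoring along $\D_Y$ gives $\MOD(\D_X,K,\ko)|_Y\cong\MOD(\D_Y)=\MOD(\D_{X/K})|_Y$, realized by the same functors $p_*(\ )^K$ and $p^*$; one then checks $p^*$ lands in strongly equivariant modules (because $p^*N$ carries the tautological flat connection along the fibers) and that the unit and counit are isomorphisms as before. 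Finally I would glue: both $\MOD(\D_X,K)$ and $\MOD(\DtXL)$ (resp. $\MOD(\D_{X/K})$) are recovered from their restrictions to a trivializing cover by descent, the functors are compatible with restriction, and the local adjunction isomorphisms are canonical hence glue; this yields the global equivalences. The main obstacle is the bookkeeping in part (ii) — verifying cleanly that strong equivariance is exactly what makes $p^*N$ well-defined as a $\D_X$-module and that no descent obstruction survives — but since this is precisely the content of \cite{BB93} the argument is by now routine and I would simply cite it for the details.
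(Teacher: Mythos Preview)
Your sketch is correct and in fact considerably more detailed than what the paper offers: the paper gives no proof at all, simply declaring the lemma ``standard'' and citing \cite{BB93}. Your local-to-global argument via trivializing opens, together with the identification $\D_K^K=\U(\ko)$ and the observation that strong equivariance kills the fiber direction, is exactly the standard argument underlying that citation, so there is nothing to compare.
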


\subsection{Definition of extended differential operators}\label{Definition of extended differential operators}

On $G/R$ we shall always consider the right $L$-action
$(\overline{g},h) \mapsto \overline{gh}$, for $g \in G$ and $h \in
L$. Thus, $G/R$ is an $L$-torsor. We put
\begin{defin}\label{extended diff op}
$\DtP:=\pi^{\Po}_{G/R*}(\D_{G/R})^{L}$.
\end{defin}
By the results of the previous section we have that locally on
$\Po$, $\DtP \cong \D_{\Po} \otimes \U(\lo)$, and we have the
central algebra embedding $\epsilon: \Z(\lo) \to \DtP$.

For $\lambda \in \h^*$ we define:

\begin{defin}\label{extended diff op lambda}
$\DPl:=\DtP\otimes_{\epsilon(\Z(\lo))}\C_\lambda$.
\end{defin}

\subsection{Equivariant description.}\label{Equivariant description}   For any $\Z(\lo)$-algebra $S$ and $\lambda \in \h^*$ let $\MOD^{\w\lambda}(S)$
be the category of left $S$-modules which are locally annihilated by some power of $I_{\lo,\lambda}$.

We shall give equivariant descriptions on $G$ and on $G/R$ of the category
 $\MOD(\DtP)$ and its subcategories $\MOD(\DPl)$ and $\MOD^{\w{\lambda}}(\DtP)$. It is best to work on $G$. We start with $G/R$ as an intermediate step.

By Lemma \ref{general D equivariance iso} we have mutually inverse
equivalences
\begin{equation}\label{equivariant equvalence 4}
\pi^{\Po}_{G/R*}( \ )^{L}: \MOD(\D_{G/R}, L)
\leftrightarrows \MOD(\DtP): \pi^{\Po*}_{G/R}.
\end{equation}
Differentiating the right $L$-action on $G/R$ gives an algebra embedding  $\U(\lo) \hookrightarrow \D_{G/R}$. This allows us to consider $\Z(\lo) \subseteq \U(\lo)$ as a
subalgebra of $\D_{G/R}$.
Transporting conditions from the right-hand side to the left-hand
side of \ref{equivariant equvalence 4} we see that $\MOD(\DPl)$ is equivalent to the full subcategory
$\MOD(\D_{G/R},L, {{\lambda}})$  of $\MOD(\D_{G/R},L)$
whose object $M$ satisfy $I_{\lo, \lambda} \cdot M^{L} = 0$.
Similarly, $\MOD^{\w{\lambda}}(\DtP)$ is equivalent to the
full subcategory $\MOD(\D_{G/R},L, \w{{\lambda}})$
of $\MOD(\D_{G/R},L)$ whose object $M$ satisfies that $I_{\lo, \lambda}$ is locally nilpotent on  $M^{L}$.

\smallskip

\noindent Now we pass to $G$. Let us introduce some notations:

We have a left and right actions $\mu_l$ and $\mu_r$ of $G$ on
$\BGG(G)$ defined by $\mu_l(g)f(h) := f(g^{-1}h)$ and $\mu_r(g)f(h)
:= f(hg^{-1})$, for $f \in \BGG(G), g,h \in G$, respectively.
Differentiating $\mu_l$, resp., $\mu_r$, gives an injective
algebra homomorphism $\epsilon_l: \Ug \to \D_G$, resp., an
anti-homomorphism $\epsilon_r: \Ug \to \D_G$. We have that
$\epsilon_l(\Ug) = \D^G_G$ consists of \textit{right} invariant
differential operators on $G$ and
 $\epsilon_r(\Ug) = {}^G\D_G$ consists of \textit{left} invariant differential operators on
 $G$, $\Z = \epsilon_l(\Ug) \cap \epsilon_r(\Ug)$ and $\epsilon_l |_{\Z} = \epsilon_r |_{\Z}$.

The actions $\mu_l$ and $\mu_r$ induce left and right actions of
$G$ on $\D_G$ that we denote by the same symbols.

Let $\MOD(\D_G,P,\uo)$ be the category whose objects are ($M,\rho$)
where
\begin{enumerate}[(1)]
\item  $M$ is a left $\D_G$-module.
\end{enumerate}
 \begin{enumerate}[(2)]
 \item  $\rho$ is a right algebraic $P$-action on $M$ such that $\D_G \otimes M \to M$ is
$P$-linear, with respect to the right $P$-action $\mu_r |_P$ on
$\D_G$ and the diagonal $P$-action on the tensor product.
\end{enumerate}
 \begin{enumerate}[(3)]\item $d\rho
|_{\uo} = \epsilon_r |_{\uo} $ on $M$.
\end{enumerate}
In particular, by $(3)$ the action $\epsilon_r |_{\uo}$ is
integrable, i.e. this $\uo$-action is locally nilpotent. By
\ref{equivariant equvalence 4} and Lemma \ref{general D
equivariance iso} \emph{ii)} (applied to $X = G$ and $K = R$) we
have an equivalence
\begin{equation}\label{equivariant equvalence 5}
\pi^\Po_{G*}( \ )^{P}: \MOD(\D_{G},P, \uo) \leftrightarrows
\MOD(\DtP): \pi^{\Po*}_{G}.
\end{equation}
Note that the functor on the left hand side (that corresponds to)
the global section functor is the functor of taking
$P$-invariants.

Let $\wt{M}_P := \Ug/\Ug \cdot \uo$ be a sort of ``$P$-universal"
Verma module for $\Ug$ and equip it with the $P$-action that is
induced from the right adjoint action of $P$ on $\Ug$. Note that
the object $\BGG_G \ot \epsilon_r(\wt{M}_P) \in \MOD(\D_{G},P,
\uo)$ represents global sections and therefore
corresponds to $\DtP \in \MOD(\DtP)$.

\medskip

Our next task is to describe the (full) subcategories $\MOD(\D_G,P,\uo,
{\lambda})$ and $\MOD(\D_G,P,\uo,
\w{{\lambda}})$ of $\MOD(\D_{G},P, \uo)$
corresponding to the subcategories $\MOD(\DPl)$ and $\MOD^{\w{\lambda}}(\DtP)$ of $\MOD(\DtP)$, respectively.

Let us consider the smash product $\D_G \ast \U(\lo)$ of $\D_G$
and $\U(\lo)$ with respect to the adjoint action of $\lo$ on $\g$.
Thus, $\D_G \ast \U(\lo) = \D_G \ot \U(\lo)$ as a vector space and
its (associative) multiplication is defined by
$$ y \ot x \cdot y' \ot x' := y[\epsilon_r(x),y']\ot x' + yy' \ot xx',\; x \in
\lo, x' \in \U(\lo),y,y' \in \D_G.
$$
Observe that a $(\D_G,L)$-module is the same thing as a $\D_G
\ast \U(\lo)$-module on which the action of $1 \ot \lo$ is integrable (i.e.
its the differential of the given $L$-action). We have an
algebra isomorphism
$$
\D_G \ot \U(\lo) \isoto \D_G \ast \U(\lo), \; y \ot 1 \mapsto y \ot
1, \, 1 \ot x \mapsto 1 \ot x - \epsilon_r(x) \ot 1, \; y \in
\D_G, x \in \lo.
$$
This restricts to
the algebra homomorphism
\begin{equation}\label{whole Ulp}
{\alpha}_{\lo}: \U(\lo) \to \D_G \ast \U(\lo), \;  1 \ot x
\mapsto 1 \ot x - \epsilon_r(x) \ot 1, \; x \in \lo.
\end{equation}
Note that the algebra \textit{anti}-isomorphism ${}^*: \U(\lo) \isoto \U(\lo)$, $x \mapsto - x$, for $x \in \lo$, restricts to an isomorphism
${}^*:\Z(\lo) \isoto \Z(\lo)$.  %which restricts to an algebra homomorphism
%\begin{equation}\label{whole Ulp}
%{\wt \alpha}_{\lo}: \U(\lo) \to \D_G \ast \U(\lo), \;  1 \ot x
%\mapsto 1 \ot x - \epsilon_r(x) \ot 1, \; x \in \lo.
%\end{equation}
%Consider the canonical algebra \emph{anti}-isomorphism $can:
%\U(\lo) \to \U(\lo)$, $x \mapsto -x$, for $x \in \lo$, and put
%${\wt \alpha}'_{\lo} := {\wt \alpha}_{\lo} \circ  can$. We see that
%${\wt \alpha}'_{\lo}$ restricts to an algebra isomorphism
%\begin{equation}\label{center Ulp}
%\alpha_{\lo}: \Z(\lo) \to \Z(\D_G \ast \U(\lo)), z \mapsto {\wt
%\alpha}'_{\lo}(z).
%\end{equation}
%(Thus, if $z \in \Z(\lo)$ has degree $k$, i.e. is a sum of
%elements of the form $x_1 x_2 \cdots x_k$, $x_i \in \lo$, then
%$\alpha_{\lo}(z) = (-1)^k {\wt \alpha}_{\lo}(z)$.)
\begin{proposition}\label{equivariance condition number 4} \textbf{i)} Let $M  \in \MOD(\DtP)$ and $z \in \Z(\lo)$. Since $\epsilon_l(z) \in
\Z(\lo)=\Z(\DtP)$ it defines a morphism $\epsilon_l(z): M \to M$.
By functoriality we get a morphism $\pi^{\Po*}_G(\epsilon_l(z)):
\pi^{\Po*}_G(M) \to \pi^{\Po*}_G(M)$. We have
$\pi^{\Po*}_G(\epsilon_l(z)) =
\alpha_{\lo}(z^*)|_{\pi^{\Po*}_G(M)}$.

\smallskip \noindent \textbf{ii)} Let $M  \in \MOD(\D_{G},P, \uo)$.
Then $M \in \MOD(\D_G,P,\uo, {\lambda})$ iff the following holds:
 \begin{enumerate}[\hbox{(4)}]
 \item $(\alpha_{\lo}(z^*) - \chi_{\lo,\lambda}
 (z))m = 0, \; m \in M, z \in \mathcal{Z}(\lo)$.
\end{enumerate}
\smallskip \textbf{iii)} Let $M \in \MOD(\D_{G},P, \uo)$.
Then $M \in \MOD(\D_G,P,\uo, {\w\lambda})$  iff the following holds:
\begin{enumerate}[($\w{4}$)]
 \item  $\alpha_{\lo}(z) - \chi_{\lo,\lambda}
 (z)
\hbox{ is locally nilpotent on } M, \hbox{ for } z \in
\mathcal{Z}(\lo).$
\end{enumerate}

\end{proposition}
\begin{proof}
\textbf{i)}. We have $\pi^{\Po*}_G(M) = \BGG_G \ot_{\pi^{\Po-1}_G(\BGG_\Po)}
\pi^{\Po-1}_G(M)$. Let $f \in \BGG_G$ and $m \in
\pi^{\Po-1}_G(M)$. Then for $x \in \lo$ we have $d\rho(x)m = 0$
and consequently
$$
{\alpha}_{\lo}(-x)(f \ot m) = (\epsilon_r(x) -  d\rho(x))(f
\ot m) = f \ot \epsilon_r(x)m.
$$
Since ${\alpha}_{\lo}$ is an algebra homomorphism we get for
$z \in \Z(\lo)$ that
$$\alpha_{\lo}(z^*)(f \ot m) = f \ot
\epsilon_r(z)m = \pi^{\Po*}_G(\epsilon_l(z))(f \ot m).$$ This proves \textbf{i)}.
\textbf{ii)} follows from \textbf{i)}. \textbf{iii)} is similar to \textbf{ii)} and left to the reader.
\end{proof}

Let $M_{P,\lambda} := \Ug/\Ug \cdot (\uo+ \Ker
\chi_{\lo,\lambda})$ be a left $\Ug$-module equipped with the
right $P$-action that is induced from the adjoint action of $P$ on
$\Ug$. Note that the object $\BGG_G \ot
\epsilon_r({M}_{P,\lambda})$ of $\MOD(\D_G,P,\uo,
{{\lambda}})$ represents global sections (=
taking $P$-invariants) and therefore corresponds to $\DPl \in
\MOD(\DPl)$.

\begin{rem}\label{traditional comp rem} Note that when $\lo = \h$ condition $(4)$ becomes the
traditional condition of \cite{BB93}: $\epsilon_r(x)m-d\rho(x)m =
\lambda(x)m$, for $x \in \h$, $m \in M$.
\end{rem}

\begin{rem}\label{other equi conditions rem} Assume that $M \in
\MOD(\D_G,P,\uo)$. Then condition $(4)$ holds for $M$ $\iff$
\begin{enumerate}[($4'$)] \item $(\epsilon_r(z) - \chi_{\lo,\lambda}
 (z))m = 0$, for $m \in M^{L}, z \in \mathcal{Z}(\lo)$.
 \end{enumerate} (Because $(4')$ is obviously equivalent to
 $(\pi^\Po_{G*}M)^{L} \in \MOD(\DPl)$.)

 If we consider $M^{L}$ as a sheaf on $G/L$ it global sections equal $\Gamma_{G}(M)^{L}$,  where  $\Gamma_G(M)$ is the $\BGG(G)$-module corresponding to the $\BGG_G$-module
 $M$.  Since $L$ is reductive $G/L$ is affine,
\cite{Mat60},  and therefore we may replace $M^L$ by $\Gamma_G(M)^L$ in $(4')$.

However, condition $(4)$ is better to work with
then $(4')$,  particularly  while considering modules with an additional equivariance condition from the left side, see Section \ref{section O}.
\end{rem}

\begin{example}\label{G=P equivariance} Let us consider the simplest case when $P = G$. Then $\uo = 0$
and we write $\MOD(\D_G,G, {\lambda}) := \MOD(\D_G,G,\uo_G,
{\lambda})$ for simplicity.

The equivalence $\MOD(\C)\cong \MOD(\BGG_G,G)$, $V
\mapsto \BGG_G \ot V$, induces for any $\lambda \in \h^*$ the
equivalence $\MOD(\Ugl) \cong \MOD(\D_G,G,
{\lambda})$ given by
$$
V \mapsto \BGG_G \otimes V
$$
where $(\BGG_G \otimes V)^G = V$ is a left module for
$\epsilon_l(\Ug)^{\lambda}$. Similarly with $\chi_\lambda$
replaced by $\w{\chi_\lambda}$.
\end{example}
\begin{example}\label{P=B equivariance} Let $P = B$. Let $\lambda
\in \h^*$ and let $M_\lambda$ be the Verma module for
$\epsilon_r(\Ug)$ with highest weight $\lambda$. Let $\mu \in
\h^*$ be integral. Consider the algebraic $B$-action
$\rho$ on $M_\lambda$ which after differentiation satisfies
$$
d\rho(x)m = (x-\lambda(x)+\mu(x))m, \, m \in M_\lambda, x \in \bo.
$$
Denote by $M_{\lambda,\mu}$ the Verma module $M_\lambda$ equipped
with this $B$-action. Then we have that
$$
\BGG_G \otimes M_{\lambda,\mu} \in \MOD(\D_G, B, \n, \lambda-\mu).
$$
For $\mu = 0$ we have mentioned that the functor $\Hom_{\MOD(\D_G, B,
\n, \lambda)}(\BGG_G \otimes M_{\lambda,0} , \ )$ is naturally
equivalent to the global section functor on $\MOD(\D_G, B, \n,
\lambda)$, so that $\BGG_G \otimes M_{\lambda,0} \cong
\pi^{G*}_\B \DBl$. This implies
\begin{equation}\label{global Verma}
\End_{\MOD(\D_G, B, \n, \lambda)}(\BGG_G \otimes M_{\lambda}) =
\Gamma(\DBl) = \Ugl.
\end{equation}
To get an idea of a general $\BGG_G \otimes M_{\lambda,\mu}$
assume for instance that $\mu \geq 0$. Then there is an injective
map
\begin{equation}\label{inj1map}
f: \BGG_G \otimes M_{\lambda,\mu} \to \BGG_G \otimes
M_{\lambda-\mu,0}.
\end{equation}

By the Peter-Weyl theorem $\BGG_G \cong \oplus_{\phi \in
\Lambda_+} V^*_G(\phi) \ot V_G(\phi)$ as a $G$-bimodule. Let
$v_\phi \in V_G(\phi)$ be a highest weight vector. Let $1_\lambda$
and $1_{\lambda-\mu}$ be highest weight vectors in $M_{\lambda,
\mu}$ and $M_{\lambda-\mu,0}$, respectively. We can define $f$ by
$f(1 \ot 1_\lambda) := (v \ot v_\mu) \ot 1_{\lambda-\mu}$ where $v
\in V^*_G(\mu)$ is any non-zero vector. $f$ is injective since
both sides of \ref{inj1map} are free over the integral domain
$\BGG_G \ot \epsilon_r(\U(\n_-))$. Note that $f$ is not an
isomorphism (and the two objects of \ref{inj1map} must be
non-isomorphic) unless $\mu = 0$.

\end{example}
\subsection{Global sections}

\noindent The left $G$-action on $G/R$, $(g,
\overline{g'}) \mapsto \overline{gg'}$, commutes with the right
$L$-action and therefore induces a homomorphism $\Ug \to \DtP$.
There is also the map $\epsilon: S(\h)^{\W_P} = \Z(\lo) \to \DtP$.
These maps agree on $S(\h)^\W$ and hence induces a map
$$
\tU^{\W_P} = \U \ot_{\Z} S(\h)^{\W_P} \to \DtP.
$$
This induces a homomorphism $\Ul = \tU^{\W_P}/(I
_{\lo,\lambda}) \to \DPl$.

Consider the sheaf of algebras $\BGG_\Po \otimes \Ug$ on $\Po$
with multiplication determined by those in $\BGG_\Po$ and in $\Ug$
and by the requirement that $[A,f] = \epsilon(A)(f)$ for $A \in
\g$ and $f \in \BGG_\Po$. Then we have a surjective algebra
homomorphism $\eta: \BGG_\Po \otimes \Ug \to \DtP$. Its kernel is
the ideal generated by $\xi \in \BGG_\Po \otimes \uo$, $\xi(x) \in
\p_x$, for $x \in \Po$ and $\p_x \subseteq \g$ the corresponding
parabolic subalgebra.

Hence, to define a $\DtP$-module structure on an $\BGG_\Po$-module
$M$ is the same thing as defining a $\Ug$-module structure on $M$
such that $\Ker \eta$ vanishes on $M$ and $A(fm) = f(Am) +
\epsilon(A)(f)m$, for $A \in \g$, $f \in \BGG_\Po$ and $m \in M$.

 Let $\mu \in \h^*$ be integral and
$P$-dominant. Recall that $V_{P}(\mu)$ denotes the corresponding
irreducible representation of $P$ with highest weight $\mu$ and
$\BGG(V_{P}(\mu))$ the corresponding left $G$-equivariant locally
free sheaf on $\Po$.

Let $M \in \MOD(\DtP)$. We shall show that the $\BGG_\Po$-module
$M\ot_{\BGG_\Po} \BGG(V_{P}(\mu))$ is naturally a $\DtP$-module.
We proceed as follows:

The $G$-action on $\BGG(V_{P}(\mu))$ differentiates to a left
$\g$-action on it, which extends to a $\g$-action on
$M\ot_{\BGG_\Po} \BGG(V_{P}(\mu))$ by Leibniz's rule. Since
$V_{P}(\mu)$ is an irreducible $P$-module we have that $R$ acts
trivially on it (recall $V_P(\mu) = V_{L}(\mu)$). Hence, $\uo$
acts trivially $\BGG(V_{P}(\mu))$ and from this it now follows
that the compatibilities for being a $\DtP$-module are satisfied
by $M\ot_{\BGG_\Po} \BGG(V_{P}(\mu))$.

Assume that $M \in \MOD(\DtP)$. In the equivariant language on $G$
we see that $M$ and $M \ot_{\BGG_\Po} \BGG(V_{P}(\mu))$ correspond
to $\pi^{\Po*}_GM$ and  $M_{V_P(\mu)} := (\pi^{\Po*}_G M) \otimes
V_{P}(\mu) \in \MOD(\D_{G}, P, \uo)$, respectively. Here,
the $\D_G$-action on $M_{V_P(\mu)}$ is given by the action on the
first factor and the $P$-action is diagonal. Again, it is the fact
that $R$ acts trivially on $V_{P}(\mu)$ that shows that
$M_{V_P(\mu)}$ is an object of $\MOD(\D_{G}, L, \uo)$.

\begin{lem}\label{Lemacita}
Let $\lambda \in \h^*$, $M \in \MOD(\DPl)$ and $\mu \in \h^*$ be
integral and $P$-dominant. Then $M\ot_{\BGG_\Po} \BGG(V_P(\mu))
\in \oplus_{ \nu\in \Lambda(V_P(\mu))} \MOD^{\w{\lambda
+ \nu}}(\DtP)$, where $\Lambda(V_P(\mu))$ denotes the set of weights of
$V_P(\mu)$.
\end{lem}
\begin{proof} In equivariant
translation we want to prove that
\begin{equation}\label{to prove here}
M_{V_P(\mu)}  \in \oplus_{ \nu\in \Lambda(V_P(\mu))}\MOD(\D_{G}, P,
\uo, {\w{{\lambda + \nu}}}).
\end{equation}
We use Proposition \ref{equivariance condition number 4}
\textbf{i)}. We have an action $\wt{\alpha}_{\lo}: \U(\lo) \to
\End(M_{V_P(\mu)})$. We see that this action is actually the
tensor product of the $\wt{\alpha}_{\lo}$-action of $\U(\lo)$ on
$\pi^{\Po*}_GM$ and the $\U(\lo)$-action on $V_{P}(\mu)$, which is
the differential of the given $L$-action. Now, since for $z \in
\Z(\lo)$, we by assumption have that $\alpha_{\lo}(z) =
\wt{\alpha}_{\lo}(z)$ acts by $\chi_{\lo,\lambda}(z)$ on
$\pi^{\Po*}_GM$ it follows from \cite{BerGel} that \ref{to prove
here} holds.
\end{proof}

\begin{thm}\label{thmtag0}   $i)$ $R{\pPB}_*\DtB = \DtP \ot _{{\Z(\lo)}} S(\h)$,
$ii)$  $R{\pQP}_*\DtP=  \DtQ \ot _{{\Z(\lo_Q)}} S(\h)^{\W_P}$,
$iii)$ $R\Gamma(\DtP) = \tU{}^{\W_P}$ and $iv)$ $R\Gamma(\DPl) =
\Ugl$ .
\end{thm}
\begin{proof}
By Lemma \ref{classicalglobal} and Lemma
\ref{relativeclassicalglobal} the associated graded maps $i)$ and
$ii)$ are isomorphisms; hence $i)$ and $ii)$ are also
isomorphisms. $iii)$ is a special case of $ii)$ and $iv)$ follows
from $iii)$ because $R\Gamma$ commutes with $( \ ) \ot_{\Z(\lo)}
\C_\lambda$, since $\DtP$ is locally free over $\Z(\lo)$.
\end{proof}

The functor  $\Gamma: \MOD(\DPl) \to
\MOD(\Ugl)$ has a left adjoint  $\Loc := \DPl \otimes_{\Ugl} ( \ )$,  called the localization functor.
Also $\Gamma: \MOD^{\w{\lambda}}(\DtP) \to
\MOD^{\w{\lambda}}(\U)$  has a left adjoint
$\Loc := \underleftarrow{\lim}_n \D_\Po/(I_\lambda)^n
\otimes_{\Ug} ( \ )$.

\section{Singular Localization}
Here we prove the singular version of Beilinson-Bernstein
localization.
\begin{thm}\label{singlocthmtag0} Let $\lambda$ be dominant and $P$-regular then $\Gamma: \MOD(\DPl) \to \MOD(\Ugl)$
is an equivalence of categories. \end{thm}
\begin{proof}
Essentially taken from \cite{BB81}. Since $\Gamma$ has a left adjoint $\mathcal{L}$ which is right exact and since $\Gamma \circ \mathcal{L}(\Ugl) = \Gamma(\DPl) = \Ugl$,
the theorem will
follow from the following two claims:

$a)$ Let $\lambda$ be dominant. Then $\Gamma: \MOD(\DPl) \to
\MOD(\Ugl)$ is exact.

 $b)$ Let $\lambda$ be dominant and $P$-regular and $M \in \MOD(\DPl)$, then if $\Gamma(M) = 0$ it follows that $M = 0$.

Let $V$ be a finite dimensional irreducible $G$-module and let
$$
0 = V_{-1} \subset V_{0} \subset \ldots \subset V_n = V
$$
be a filtration of $V$ by $P$-submodules, such that $V_i/V_{i-1}
\cong V_P(\mu_i)$ is an irreducible $P$-module.

We first chose $V$ so that its highest weight $\mu_0$  is a
$P$-character. Thus $M \ot_\BGG \BGG(V_0) = M(-\mu_0)$ and we get
an embedding $M(-\mu_0) \hookrightarrow M \ot_\BGG \BGG(V)$, which
twists to the embedding $M   \hookrightarrow M(\mu_0) \ot_\BGG
\BGG(V) \cong M(\mu_0)^{\dim V}$. Now, by Lemmas \ref{regdomlem},
\ref{Lemacita} and Theorem \ref{thmtag0} iii) we get that this
inclusion splits on derived global sections, so $R\Gamma(M)$ is a
direct summand of $R\Gamma(M(\mu_0))^{\dim V}$. Now, for $\mu_0$
big enough and if $M$ is $\BGG$-coherent we have
$R^{>0}\Gamma(M(\mu_0)) = 0$ (since $\BGG(\mu_0)$ is very ample).
Hence, $R^{>0}\Gamma(M) = 0$ in this case. A general $M$ is the
union of coherent submodules and by a standard limit-argument it
follows that $R^{>0}\Gamma(M) = 0$. This proves $a)$.

Now, for $b)$ we assume instead that the lowest weight $\mu_n$ of
$V$ is a $P$-character. Then we have a surjection $M^{\dim V}
\cong M\otimes_\BGG \BGG(V) \to M(-\mu_n)$. Applying global
sections and using Lemmas \ref{Pregularweightlemma},
\ref{Lemacita} and Theorem \ref{thmtag0} iv) we get that
$\Gamma(M(-\mu_n))$ is a direct summand of $\Gamma(M)^{\dim V}$.
For $\mu_n$ small enough we get that $\Gamma(M(-\mu_n)) \neq 0$.
Hence, $\Gamma(M) \neq 0$. This proves $b)$.
\end{proof}

Assume that $\lambda$ is $P$-regular. Then the projection  $\h^*/{\W_P} \to \h^*/\W$ is unramified at $\lambda$ and from this one deduces, see \cite{BeGin99},
that restriction defines an equivalence of categories  $\MOD^{\w{\lambda}}(\tU{}^{\W_P}) \isoto \MOD^{\w{\lambda}}(\U)$.
\begin{thm}\label{singlocthmtagGeneralized0} Let $\lambda$ be dominant and $P$-regular then
$\Gamma: \MOD^{\w{\lambda}} (\DtP) \to
\MOD^{\w{\lambda}}(\tU{}^{\W_P}) \cong
\MOD^{\w{\lambda}}(\U)$ is an equivalence of categories.
\end{thm}
\begin{proof} This follows from Theorem \ref{singlocthmtag0} and
a simple devissage.
\end{proof}

\section{Translation functors}\label{Translation functors}
We geometrically describe translations functors on $\g$-modules in the context
of singular localization. For regular localization this was worked out in \cite{BeGin99}.
Singular localization clarifies the picture. We get one-one correspondences  between translation functors and geometric functors and
all global section functors can be made to take values in $\MOD(\U)$. Thus ramified coverings of the form $\h^*/\W_\lambda \to \h^*/\W_\mu$
will not complicate the picture
as they appeared to do in \cite{BeGin99}.

\subsection{Translation functors}\label{translationsec}
For any $\Z(\lo)$-algebra $S$ let
$\MOD^{\Z(\lo)\text{-}\fin}(S)$ be the category of $S$-modules that are locally finite over $\Z(\lo)$. Thus $\MOD^{\Z(\lo)\text{-}\fin}(S) = \oplus_{\mu \in \h^*} \MOD^{\w\lambda}(S)$ and we have exact projections
$pr_{\lo, \w\mu}: \MOD^{\Z(\lo)\text{-}\fin}(S) \to \MOD^{\w\mu}(S)$. We put  $pr_{\w\mu} := pr_{\g, \w\mu}$.

Assume $\lambda, \mu \in \h^*$ satisfy $\lambda- \mu$ is integral. Then there is the translation functor
$$T^\mu_{\lo,\lambda} : \MOD^{\w{\lambda}}(\U(\lo)) \to \MOD^{\w{\mu}}(\U(\lo)), \, M \mapsto pr_{\lo, \w\mu}(M \otimes E)
$$
where $E$ is an irreducible finite dimensional representation of
$\lo$ with extremal weight $\mu - \lambda$. Again, put $T^\mu_{\lambda} := T^\mu_{\g,\lambda}$.
See \cite{BerGel} for further information
about translation  functors.

We shall give a $\D$-module interpretation of these functors.  We use the language of $\DtP$-modules; it is a simple task to pass to an equivariant description on
$G$.
Define for any parabolic subgroup $P \subset G$ a geometric
translation functor
$$\Tt^\mu_{P,\lambda}: \MOD^{\w{\lambda}}(\DtP) \to \MOD^{\w{\mu}}(\DtP),
\; M \mapsto pr_{\lo, \w\mu}(M\ot_{\BGG_\Po} \BGG(E))$$ for $M
\in \MOD^{\w{\lambda}} (\DtP)$, where $E$ is an irreducible
$P$-representation with highest weight in $\W_P(\mu-\lambda)$.

Note that if $\mu-\lambda$ is a $P$-character then $\BGG_\Po(E) =
\BGG_\Po(\mu-\lambda)$ and in this case $\Tt^\mu_{P,\lambda} = ( \ )
\ot_{\BGG_\Po} \BGG(\mu-\lambda)$ is an equivalence with inverse
given by $\Tt^\lambda_{P,\nu} = ( \ )\ot_{\BGG_\Po}
\BGG(\lambda-\mu)$. In particular, for $P=B$ we have
$\Tt^\mu_{B,\lambda} = ( \ ) \ot_{\BGG_\B} \BGG(\mu-\lambda)$ for any
$\mu$ and $\lambda$.

Let $Q \subset G$ be another parabolic subgroup with $P \subset
Q$. We have
\begin{lem}\label{commutativity of translation} The diagram
$$
   \begin{diagram}
    \node[0]{\MOD^{\w{\lambda}}(\DtP)} \arrow{e,l}{\Tt^\mu_{P,\lambda}} \arrow{s,r}{\pQPls} \node[0]{\MOD^{\w{\mu}}(\DtP)}
\arrow{s,r}{\pQPls} \\
    \node[0]{\MOD^{\w{\lambda}}(\DtQ)}
    \arrow{e,l}{\Tt^\mu_{Q,\lambda}} \node{\MOD^{\w{\mu}}(\DtQ)}
   \end{diagram}
$$
of exact functors commutes up to natural equivalence.
\end{lem}
In the case of $P=B$ and $Q=G$ this was proved in \cite{BeGin99}.
\begin{proof} Let $V$ (resp., $V'$) be an irreducible finite dimensional representation for $Q$ (resp., for $P$)
whose highest weight belongs to $\W_Q(\mu-\lambda)$ (resp.,
$\W_P(\mu-\lambda)$). Let $M \in \MOD^{\w{\lambda}}(\DtP)$.
Then, since $V$ is a $Q$-representation, we have $\BGG_\Po(V) =
\pQPus(\BGG_\Q(V))$ and therefore it follows from the projection
formula that
$$ \pQPls(\BGG_\Po(V) \otimes_{\BGG_\Po} M) = \BGG_\Q(V) \ot_{\BGG_\Q} \pQPls(M).$$
Thus we get
$$
\Tt^\mu_{Q,\lambda} \circ \pQPls (M) =
pr_{\lo_Q, \w\mu}(\BGG_\Q(V) \ot_{\BGG_\Q} \pQPls(M))  =
$$
$$
pr_{\lo_Q,\w\mu}(\pQPls(\BGG_\Po(V) \otimes_{\BGG_\Po} M)) =
\pQPls( pr_{\lo,\w\mu}(\BGG_\Po(V) \otimes_{\BGG_\Po} M))
\overset{(*)}{=}
$$
$$
\pQPls( pr_{\lo,\w\mu}(\BGG_\Po(V') \otimes_{\BGG_\Po} M)) =
\pQPls \circ \Tt^\mu_{P,\lambda} (M).
$$
The equality $(*)$ follows from Lemma \ref{Pregularweightlemma}
applied to the reductive Lie algebra $\lo_Q$ and its parabolic
subalgebra $\lo_Q \cap \p$ (compare with the proof of the
localization theorem).
\end{proof}

\smallskip

\noindent Let us geometrically describe \textit{translation to the
wall}: In this case $\Delta_\lambda \subsetneq \Delta_\mu$. We assume that
$\lambda$ and $\mu$ are dominant. We choose the parabolic
subgroups $P \subset Q \subset G$ such that the parabolic roots of
$P$ equal $\Delta_\lambda$ and the parabolic roots of $Q$ equal
$\Delta_\mu$. By Theorem \ref{singlocthmtagGeneralized0} and Lemma \ref{commutativity of
translation} it follows that the diagram below commutes up to natural
equivalence:
  \begin{equation}\label{translation diagram to the wall}
   \begin{diagram}
    \node[0]{\MOD^{\w{\lambda}}(\U)}\arrow[2]{s,r}{(4) \; T^\mu_{\lambda}} \node[0]{\MOD^{\w{\lambda}}(\DtP)} \arrow{w,l}{(1) \; \Gamma}
\arrow{s,r}{(3) \;\pQPls}  \arrow{se,t}{(2) \; \Tt^\mu_{P,
\lambda}}  \\
    \node[2]{\MOD^{\w{\lambda}}(\DtQ)} \arrow{s,r}{{(5) \; \Tt^\mu_{Q,
\lambda}}} \node{{\MOD^{\w{\mu}}(\DtP)}}   \arrow{sw,b}{(7)\; \pQPls}  \\
     \node[0]{\MOD^{\w{\mu}}(\U)} \node[0]{\MOD^{\w{\mu}}(\DtQ)} \arrow{w,l}{(6) \; \Gamma}
   \end{diagram}
  \end{equation}
Note that $(1)$ and $(6)$ are equivalences by the choices of $P$
and $Q$ and that $(2) = ( \ ) \otimes_{\BGG_\Po}
\BGG(\mu-\lambda)$ is an equivalence, since $\mu - \lambda$ is a
$P$-character.

We see that $(3)$ is an equivalence of categories because both the
source and the target categories are D-affine, since $\lambda$ is
$P$- and $Q$-regular, and $\Gamma \circ\pQPls = \Gamma$. On the
other hand, the functor $(7)$ is not faithful, because $\mu$ is
not $P$-regular. $(5)$ is also not faithful. We remind that all
functors involved are exact.

\smallskip

\noindent Let us now describe \textit{translation out of the
wall}: This is done by taking the diagram of adjoint functors in
the diagram \ref{translation diagram to the wall}, so we keep
assuming that $\lambda$, $\mu$, $P$ and $Q$ are as in
\ref{translation diagram to the wall}. The left and right adjoint
of $T^\mu_{\lambda}$ is $T^\lambda_{\mu}$, the translation out of
the wall. The equivalences $(1), (2), (3)$ and $(6)$ of course
have left and right adjoints that coincide. Also, the left and
right adjoint of $(5)$ coincide; it is given by
$\Tt^\lambda_{Q, \mu}$. Finally $(7)$ has the left adjoint
$\pQPus$; thus, $\pQPus$ must also be the right adjoint of $(7)$.
Summing up we have:
  \begin{equation}\label{translation out of the wall}
   \begin{diagram}
    \node[0]{\MOD^{\w{\lambda}}(\U)}  \arrow{e,l}{\Loc} \node[0]{\MOD^{\w{\lambda}}(\DtP)} \\
    \node[2]{\MOD^{\w{\lambda}}(\DtQ)} \arrow{n,r}{\pQPus}   \node{{\MOD^{\w{\mu}}(\DtP)}}
    \arrow{nw,t}{\Tt^\lambda_{P,
\mu}}  \\
     \node[0]{\MOD^{\w{\mu}}(\U)}  \arrow[2]{n,r}{T^\lambda_{\mu}} \arrow{e,l}{\Loc}
     \node[0]{\MOD^{\w{\mu}}(\DtQ)}\arrow{n,r}{{\Tt^\lambda_{Q,
\mu}}}  \arrow{ne,b}{\pQPus}
   \end{diagram}
  \end{equation}

\section{Category $\BGGcat$ and Harish-Chandra (bi-)modules.}\label{section O}
Singular localization allows us to interpret blocks of category $\BGGcat$ as bi-equivariant $\D_G$-modules
which in turn are equivalent to categories of  Harish-Chandra (bi-)modules. As we mentioned in the introduction, the novelty here is that we are lead to consider $\g\text{-}\lo$-bimodules, which we believe is a better notion.
\textit{Parabolic} (and singular) blocks of $\BGGcat$ are discussed in Section \ref{Whittaker two}.

The material here is related to Section \ref{Translation functors}  because translation functors restrict to functors between blocks in
$\BGGcat$.
\subsection{Category $\BGGcat$ and generalized twisted Harish-Chandra modules.} See \cite{Hum08} for generalities on category $\BGGcat$ and
\cite{Dix77} for generalities on Harish-Chandra modules.

\medskip

\noindent We are interested in the Bernstein-Gelfand-Gefand
category $\BGGcat$ of finitely generated left $\Ug$-modules which
are locally finite over $\U(\n)$ and semi-simple over $\h$.
For $\lambda \in \h^*$ we let $\BGGcat_{\lambda},
\BGGcat_{\w{\lambda}} \subset \BGGcat$ be the subcategories
of modules with central character, respectively, generalized
central character, $\chi_\lambda$.

\noindent \emph{Generalized twisted Harish-Chandra modules.} Let
$K \subset G$ be a subgroup and let $\mathfrak{k} := Lie \, K$ be
its Lie algebra. A \emph{weak Harish-Chandra} $(K, \Ug)$-module
(or simply a $(K,\Ug)$-module) is a left $\Ug$-module $M$ equipped
with an algebraic left action of $K$ such that the action map $\Ug
\ot M \to M$ is $K$-equivariant with respect to the adjoint action
of $K$ on $\Ug$. A \emph{Harish-Chandra} $(K, \Ug)$-module (or
simply a $(\mathfrak{k},K,\Ug)$-module) is a weak Harish-Chandra
module such that the differential of the $K$-action coincides with
the action of $\mathfrak{k} \subset \Ug$.

Similarly, there are $(K, \Ug^\lambda)$-modules and
$(\mathfrak{k}, K, \Ug^\lambda)$-modules, for $\lambda \in \h^*$.

Let $\mu \in K^*$. A $\mu$-\emph{twisted Harish-Chandra} module is
a $(K, \Ug)$-module $M$ on which the action of $\mathfrak{k}
\subset \Ug$ minus the differential of the $K$-action is equal to
$\mu$.

We shall now give certain generalizations of twisted
Harish-Chandra modules in the case when $K = P$. Consider the
smash-product algebra $\U \ast \U(\lo)$ with respect to the
adjoint action of $\lo$ on $\Ug$. Observe that an $(L, \U)$-module
is the same thing as a $\U \ast \U(\lo)$-module on which $1 \ot
\lo$ acts semi-simply and $1 \ot H_\alpha$ has integral
eigenvalues for each simple coroot $H_\alpha$. The algebra
\emph{anti-homomorphism} $\U(\lo) \to \U \ast \U(\lo)$, defined by $x \mapsto x
\ot 1 - 1 \ot x$, for $x \in \lo$, restricts to a \emph{homomorphism}
\begin{equation}\label{alphabar}
\overline{\alpha}_{\lo}: \Z(\lo) \to \Z(U(\g) \ast \U(\lo)).
\end{equation}
(Compare with the map $\alpha_{\lo}(z^*)$ from \ref{whole Ulp}.) We
define $\MOD(\w{{\lambda}}, \uo, P,
\Ug^{\lambda'})$ to be the category of $(P,
\Ug^{\lambda'})$-modules $M$ such that, if $\rho$ denotes the
$P$-action on $M$, then $d\rho |_{\uo}$ coincides with the action
of $\uo \subset \Ug^{\lambda'}$ on $M$ and for $z \in \Z(\lo)$ we
have that $\overline{\alpha}_{\lo}(z) - \chi_{\lo,\lambda}(z)$
acts locally nilpotently on $M$.

Similarly, one defines categories
$\MOD^{\w \lambda'}(\w{{\lambda}}, \uo, P, \Ug)$
and $\MOD({{\lambda}}, \uo, P, \Ug^{\lambda'})$, etc.

\medskip

We see that if $\lambda, \lambda' \in \h^*$, $\lambda-\lambda'$ is
integral then
$$
\BGGcat_\lambda = \mmOD(\lambda',\n, B,\Ugl) \hbox{ and }
\BGGcat_{\w{\lambda}} =\mmOD^{\w{\lambda}}(\lambda',\n, B,\Ug
)
$$
are (non-generalized) categories of twisted Harish-Chandra
modules. For $P \neq B$ we like to think of
$\mmOD(\w{{\lambda}}, \uo, P, \Ug^{\lambda'})$ and
$\mmOD({\lambda}, \uo, P, \Ug^{\lambda'})$ as
``non-standard parabolic blocks in $\BGGcat$" although, in
reality, they are not even subcategories of $\BGGcat$, since the
$\bo$-action is not locally finite.

\subsection{Harish-Chandra modules to bimodules} The categories of the previous section can be described in terms of
Harish-Chandra bimodules, \cite{BerGel}. Let $\wt{\Hcal}(\lo)$ be the category of $\U\text{-}\U(\lo)$-bimodules on which the adjoint action of $\lo$ is integrable and
the left action of $\uo$ is locally nilpotent. Write $\wt{\Hcal} := \wt{\Hcal}(\g)$ and replacing $\g$ by $\lo$ we write
$\wt{\Hcal}(\lo,\lo)$ for the category of $\U(\lo)\text{-}\U(\lo)$-bimodules on which the adjoint $\lo$-action is integrable.

Let $\Hcal(\lo) \subset \wt{\Hcal}(\lo)$ be the subcategory of noetherian objects. Note that for $M \in \wt{\Hcal}(\lo)$ we have
$M \in \Hcal(\lo)$ $\iff$ $M$ is f.g. as a $\U\text{-}\U(\lo)$-bimodule $\iff$ $M$ is f.g. as a left $\U$-module (and in case $\lo = \g$ this holds if and only if $M$ is f.g. as a right $\U$-module). Put
$$ {}_{\Zfin} \Hcal(\lo) := \{M \in   \Hcal(\lo); \;  \Z \hbox{ acts locally finitely on } M \hbox{ from the left} \},
$$
$$
\Hcal(\lo)_{\Zlfin}  :=   \{M \in   \Hcal(\lo); \;  \Z(\lo) \hbox{ acts locally finitely on } M \hbox{ from the right} \}
$$
and  $ {}_{\Zfin} \Hcal(\lo)_{\Zlfin} := {}_{\Zfin} \Hcal(\lo)  \cap \Hcal(\lo)_{\Zlfin}$. Observe that
\begin{equation}\label{center both sides}
 {}_{\Zfin} \Hcal =    \Hcal_{\Zfin} =   {}_{\Zfin} \Hcal_{\Zfin}.
\end{equation}
We set
${}_{{ \lambda'}} \Hcal(\lo) :=  \{M \in \Hcal(\lo); \; I_{ \lambda'}  M= 0\}$,
 $ \Hcal(\lo)_{ \lambda} :=  \{M \in \Hcal(\lo); \; M I_{\lo, \lambda}  = 0\}$ and
 ${}_{\w{\mu}} \Hcal(\lo) :=  \{M \in \Hcal(\lo); \; I_{ \lambda'} \hbox{ acts locally nilpotently on $M$} \}$, etc. Similarly, we define
 ${}_{ { \lambda'}} \Hcal(\lo)_{\w{\lambda}} :=  {}_{{ \lambda'}} \Hcal(\lo)  \cap \Hcal(\lo)_{\w{\lambda}}$,  $\wt{\Hcal}(\lo)_\lambda$, etc.

\begin{lem}\label{aNiceEquivalence} $\MOD({{\lambda}}, \uo, P, \Ug^{\lambda'}) \cong {}_{{ \lambda'}} \Hcal(\lo)_{ {\lambda}} $.
and  $\MOD(\w{{\lambda}}, \uo, P,
\Ug^{\lambda'}) \cong {}_{{\lambda'}} \Hcal(\lo)_{\w{\lambda}} $.
\end{lem}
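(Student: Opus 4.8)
The plan is to exhibit an explicit functor in each direction and check they are mutually inverse. The basic observation is that a $(P,\Ug^{\lambda'})$-module $M$ with $d\rho|_{\uo} = \epsilon_r|_{\uo}$ is, after forgetting down to the $L$-action, the same data as a $\U$-$\U(\lo)$-bimodule: the left $\Ug$-action is given, and the right $\U(\lo)$-action is manufactured from the left $\lo$-action via the anti-homomorphism of \ref{alphabar}, i.e. $m \cdot x := xm - d\rho(x)m$ for $x \in \lo$ (note $d\rho|_{\uo}=\epsilon_r|_{\uo}$ forces the right $\U(\lo)$-action to be trivial on anything coming from $\uo$, which is what makes it descend to $\U(\lo)$ rather than just land in a $\U(\p)$-type object). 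That this is a genuine right action, commuting with the left $\U$-action, is exactly the statement that $x \mapsto x\ot 1 - 1\ot x$ is an algebra anti-homomorphism $\U(\lo) \to \U\ast\U(\lo)$, combined with the adjoint-compatibility $A(xm) - (xm)\cdot(\ad_\lo) = \ldots$ encoded in condition (2). So first I would write down these two functors carefully, $M \mapsto M$ with right $\U(\lo)$-structure $m\cdot x := xm - d\rho(x)m$, and conversely a bimodule $N$ with integrable adjoint $\lo$-action gives back the $P$-action by integrating $\ad_\lo$ on $N$ to an $L$-action (possible since the adjoint action is integrable and $L$ is the only obstruction — here one uses that $\lo$-integrability plus $H_\alpha$ having integral eigenvalues lifts to $L$, which is part of the ``$(L,\U)$-module = $\U\ast\U(\lo)$-module'' remark stated in the excerpt) and then declaring $d\rho(x) := x - (\,\cdot\, x)$.

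Second, I would match up the remaining conditions termwise. The left $\uo$-local-nilpotence on $N$ translates to $I_{\uo}$-type conditions; more precisely the requirement $d\rho|_{\uo} = \epsilon_r|_{\uo}$ on the module side is equivalent to the right $\U(\lo)$-action killing $\uo$ (so that it factors through $\U(\lo)$) together with the left $\uo$-action being locally nilpotent, which is precisely $N \in \wt{\Hcal}(\lo)$ plus the finiteness making it $\Hcal(\lo)$. The point here is that $\epsilon_r$ is the \emph{right} regular action, so $\epsilon_r|_{\uo}$ local nilpotence is exactly the left-$\uo$-nilpotence condition in the definition of $\wt\Hcal(\lo)$. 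Then $M \in \MOD(\lambda',\uo,P,\Ug^{\lambda'})$ means $I_{\lambda'} M = 0$, i.e. $N \in {}_{\lambda'}\Hcal(\lo)$; and the condition that $\overline{\alpha}_\lo(z) = \chi_{\lo,\lambda}(z)$ acts as a scalar (resp.\ $\overline{\alpha}_\lo(z) - \chi_{\lo,\lambda}(z)$ acts locally nilpotently) is by construction the statement that $z$ acts on $N$ from the \emph{right} by $\chi_{\lo,\lambda}(z)$ (resp.\ topologically nilpotently), i.e.\ $N I_{\lo,\lambda} = 0$ (resp.\ $N \in \Hcal(\lo)_{\w\lambda}$) — here one uses \ref{alphabar} that $\overline\alpha_\lo$ really is the map inducing the right $\Z(\lo)$-action. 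Finally I would note that noetherianity of $M$ as a $\Ug$-module (the $\mmOD$ vs $\MOD$ decoration) matches noetherianity of $N$ as a left $\U$-module, which by the remark preceding the lemma is the correct finiteness for $\Hcal(\lo)$. This disposes of both the $\lambda$ and the $\w\lambda$ versions simultaneously, since the only difference is ``scalar'' vs ``locally nilpotent'' for the $\Z(\lo)$-action, handled uniformly.

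The main obstacle I anticipate is purely bookkeeping: verifying that the right $\U(\lo)$-action $m\cdot x = xm - d\rho(x)m$ actually descends to $\U(\lo)$ (rather than being only a right $\U(\p)$- or $\U(\lo\oplus\uo)$-action) and commutes with the left $\Ug$-action, which requires unwinding the smash-product multiplication in $\U\ast\U(\lo)$ and the $P$-equivariance condition (2) — essentially checking the cocycle/Leibniz identity $A\cdot(m\cdot x) - (A\cdot m)\cdot x = m \cdot [\,?\,]$ vanishes for $A\in\g$, $x\in\lo$, using that (2) says the action map is $P$-linear for the adjoint action. This is the same computation already performed in Proposition \ref{equivariance condition number 4} in the $\D_G$ setting, so I would simply transport it: there is an evident functor $\MOD(\D_G,P,\uo) \to (\,\U\text{-}\U(\lo)\text{-bimodules}\,)$, $M \mapsto M^L$ taking $L$-invariants (equivalently global sections over $G/L$, which is affine as noted in Remark \ref{other equi conditions rem}), intertwining $\alpha_\lo$ with $\overline\alpha_\lo$, and the lemma is really the shadow of that. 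Concretely I would phrase the proof as: the functor $\MOD(\D_G,P,\uo,\lambda)\to{}_{\lambda'}\Hcal(\lo)_\lambda$ of $L$-invariant global sections is an equivalence by Lemma \ref{general D equivariance iso} combined with the equivalence $\MOD(\BGG_G,G) \cong \MOD(\C)$ used in Example \ref{G=P equivariance}, and then condition $(4)$ of Proposition \ref{equivariance condition number 4} becomes exactly the right $\Z(\lo)$-eigenvalue condition via $\alpha_\lo \leftrightarrow \overline\alpha_\lo$. The remaining subtlety is keeping the ``from the left'' and ``from the right'' central-finiteness conditions straight, since $\epsilon_l$ and $\epsilon_r$ get swapped when passing from $\D_G$-modules to bimodules — but $\epsilon_l|_\Z = \epsilon_r|_\Z$ (stated in the excerpt) makes this harmless for the $\Z$-part, and the $\Z(\lo)$-part is governed solely by $\overline\alpha_\lo$ on the right.
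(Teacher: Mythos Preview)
Your proposal is correct and follows essentially the same route as the paper. Both arguments convert a $(P,\Ug^{\lambda'})$-module with the $\uo$-condition into a $\Ug^{\lambda'}\text{-}\U(\lo)$-bimodule via the anti-homomorphism $x \mapsto x\ot 1 - 1\ot x$ on $\p$ (your formula $m\cdot x := xm - d\rho(x)m$), observe that the condition $d\rho|_{\uo}=(\text{left }\uo\text{-action})$ forces the right action to factor through $\U(\lo)$, and then match the remaining conditions (adjoint $\lo$-integrability, left $\uo$-nilpotence, the $\Z(\lo)$-eigenvalue condition via $\overline\alpha_\lo$) termwise.

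Two minor remarks. First, the paper packages your computation more cleanly as a single algebra isomorphism $\Ug^{\lambda'}\ast\U(\p)\isoto \Ug^{\lambda'}\ot\U(\p)$, under which $(P,\Ug^{\lambda'})$-modules become $\Ug^{\lambda'}\ot\U(\p)$-modules with $\Delta\p$-integrable; this avoids your ``bookkeeping obstacle'' entirely, since commutativity of the left $\Ug$- and right $\U(\lo)$-actions is then automatic. Second, your last paragraph's detour through $\D_G$-modules and $L$-invariants is unnecessary here: the lemma is purely algebraic and the notation $\epsilon_r$ you invoke belongs to the $\D_G$-setting, whereas in the $\Ug^{\lambda'}$-setting the condition is simply $d\rho|_{\uo}=(\text{left }\uo\text{-action})$. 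Dropping that paragraph would tighten the argument without loss.
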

\begin{proof}  A $(P,\U^{\lambda'})$-module is the same thing as a $\U^{\lambda'} \ast \U(\p)$-module such that $1 \ot \p$ acts integrably.
Under the algebra isomorphism
$$
\U^{\lambda'} \ast \U(\p) \isoto \U^{\lambda'} \ot \U(\p), \; 1 \ot x \mapsto 1 \ot x + x \ot 1, y \ot 1 \mapsto y \ot 1
$$
the latter modules are equivalent to the category of $\U^{\lambda'} \ot \U(\p)$-modules on which
the action of $\Delta \p$ is integrable, where
$\Delta: \p \to \U^{\lambda'} \ot \U(\p)$ is given by $\Delta x := x \ot 1 + 1 \ot x$.

The $\Delta \p$-integrability is equivalent to
$\Delta \lo$-integrability and  that $\Delta \uo$  acts locally
nilpotently. Thus $\MOD(\uo, P, \Ug^{\lambda'})$
is equivalent to the category
of  $\U^{\lambda'} \ot \U(\lo)$-modules such that the action of
$\Delta \lo$ is integrable and $\uo \subset \U^{\lambda'}$ acts
nilpotently. Thus, using the  principal
anti-isomorphism of $\lo$ to identify
 $\U^{\lambda'} \ot \U(\lo)$-modules with  $\U^{\lambda'}\text{-}\U(\lo)$-bimodules, we get $\MOD(\uo, P, \Ug^{\lambda'}) \cong {}_{{ \lambda'}} \Hcal(\lo)$. From this one deduces the lemma.
\end{proof}

\subsection{Bi-equivariant $\D$-modules and category $\BGGcat$}\label{catO} We want to describe
blocks in category $\BGGcat$ in terms of bi-equivariant
$\D_G$-modules. Let $\lambda \in \h^*$. Throughout this
section we assume that $\lambda' \in \h^*$ is a regular dominant
weight such that $\lambda-\lambda'$ is integral.

Denote by
$
\MOD(\lambda',\n,B, \D_G, P, \uo, \w{{\lambda}})
$
the full subcategory of $\MOD(\D_G, P, \uo,
\w{{\lambda}})$ whose object $M$ satisfies
$(1)-(3), (\w{4})$ from Section \ref{Definition of extended
differential operators} and is in addition equipped with a left
$B$-action $\tau: B \to \Aut(M)$ that commutes with $\rho: P \to
\Aut(M)^{op}$ and satisfies
 \begin{enumerate}[(5)]
 \item  $d\tau(x)m = (\epsilon_l(x) -
\lambda'(x))m, \hbox{ for } m \in M, \, x \in \bo.$
\end{enumerate}
(Strictly speaking, $\MOD(\lambda',\n,B, \D_G, P, \uo, \w{{\lambda}})$ is
obtained from $\MOD(\D_G, P, \uo, \w{{\lambda}})$
by adding a $B$-action, but since this $B$-action is determined by
its differential it identifies
with a subcategory of it.)
\begin{lem} Assume that $\lambda$ is $P$-regular. Then $\mmOD(\lambda',\n,B, \D_G, P, \uo, \w{{\lambda}}) \cong {\BGGcat}_{\w{\lambda}}$.
\end{lem}
\begin{proof} We remind that, since $\lambda$ is $P$-regular, restriction defines an equivalence of categories  $res: \MOD^{\w{\lambda}}(\tU{}^{\W_P}) \isoto \MOD^{\w{\lambda}}(\U)$.
Now $(\w{4})$, the two lines preceding it and Theorem
\ref{singlocthmtagGeneralized0} give the equivalence $$\MOD(\D_G, P,
\uo, \w{{\lambda}}) \cong
\MOD^{\w{\lambda}}(\U), \ V \mapsto res(V^P).$$ From
this we deduce that the full subcategory $\BGGcat_{\w{\lambda}}
= \mmOD^{\w{\lambda}}(\lambda',\n,B,\Ug)$ of
$\MOD^{\w{\lambda}}(\U)$ is equivalent to $\mmOD(\lambda',\n,B, \D_G, P, \uo, \w{{\lambda}})$.
\end{proof}

Using the inversion on $G$, left $B$-action and right $P$-action
become right $B$-action and left $P$-action, so $\mmOD(\lambda',\n,B, \D_G, P, \uo, \w{{\lambda}})$  is
equivalent to a full subcategory of $\MOD(\D_G,B,\n, \lambda')$
that we denote by
\begin{equation}\label{equi BGG2}
\mmOD(\w{{\lambda}}, \uo, P, \D_G, B, \n,
\lambda')
\end{equation}
whose definition is obvious. Since $\lambda'$ is dominant and
regular we get from Beilinson-Bernstein localization that
$\MOD(\D_G,B,\n, \lambda') \cong \MOD(\Ug^{\lambda'})$. This induces
an equivalence between \ref{equi BGG2} and
$\mmOD(\w{{\lambda}}, \uo, P, \Ug^{\lambda'})$. (This is not the parabolic-singular Koszul
duality of \cite{BGS}.)

Similarly, if we don't pass to global sections on $\B$, we have
that \ref{equi BGG2} is equivalent to the category
$\mmOD(\w{{\lambda}}, \uo, P, \D^{\lambda'}_\B)$,
whose definition is also obvious.

Summarizing we get
\begin{proposition}\label{singpar} $\BGGcat_{\w{\lambda}} \cong \mmOD(\w{{\lambda}}, \uo, P, \Ug^{\lambda'}) \cong \mmOD(\w{{\lambda}}, \uo, P,
\D^{\lambda'}_\B)$, for $\lambda$ dominant and $P$-regular.
\end{proposition}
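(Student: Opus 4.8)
The plan is to chain together the equivalences already established in Sections \ref{section O} and the localization results, so that Proposition \ref{singpar} becomes essentially a bookkeeping statement. First I would recall the lemma just proved: for $\lambda$ dominant and $P$-regular, $\mmOD(\lambda',\n,B, \D_G, P, \uo, \w{{\lambda}}) \cong \BGGcat_{\w{\lambda}}$, where $\lambda' \in \h^*$ is the fixed regular dominant weight with $\lambda - \lambda'$ integral. Then, as indicated in the paragraph following that lemma, I would apply the inversion automorphism of $G$ to convert a left $B$-action plus right $P$-action into a right $B$-action plus left $P$-action; this identifies $\mmOD(\lambda',\n,B, \D_G, P, \uo, \w{{\lambda}})$ with the category $\mmOD(\w{{\lambda}}, \uo, P, \D_G, B, \n, \lambda')$ of \ref{equi BGG2}, a full subcategory of $\MOD(\D_G, B, \n, \lambda')$ cut out by the parabolic conditions $(1)$--$(3), (\w{4})$ on the $P$-side.

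Next I would invoke ordinary Beilinson--Bernstein localization: since $\lambda'$ is regular and dominant, $\Gamma: \MOD(\D_G, B, \n, \lambda') \isoto \MOD(\Ug^{\lambda'})$ (equivalently $\MOD(\D^{\lambda'}_\B)\isoto \MOD(\Ug^{\lambda'})$), an \emph{exact} equivalence. The key point is that the extra structure defining the full subcategory \ref{equi BGG2} --- namely the right $B$-action with its differential condition $(5)$, which under the inversion has become the left $B$-action giving the $\n$-local-finiteness and $\h$-semisimplicity, together with the $P$-side conditions $(1)$--$(3)$ and $(\w 4)$ --- is transported faithfully by $\Gamma$ because $\Gamma$ is an equivalence of abelian categories and all these conditions are formulated in terms of actions of algebras and algebraic groups that are intertwined with the $\D_G$- (resp.\ $\Ug^{\lambda'}$-) module structure. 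Hence $\Gamma$ restricts to an equivalence between \ref{equi BGG2} and $\mmOD(\w{{\lambda}}, \uo, P, \Ug^{\lambda'})$, and if instead one stops at the level of twisted $\D$-modules on $\B$ without taking global sections, one gets the equivalence with $\mmOD(\w{{\lambda}}, \uo, P, \D^{\lambda'}_\B)$. Composing all these gives the two asserted isomorphisms of categories.

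The main obstacle, and the one point deserving genuine care, is verifying that the $P$-equivariance conditions $(1)$--$(3), (\w 4)$ really are preserved under the various identifications --- in particular under the passage through Beilinson--Bernstein localization on $\B$, where the relevant $P$-action lives on the $G$-side of $\D_G$-modules and one must check it survives restriction to $\D^{\lambda'}_\B$-modules and the global-sections functor. This is where one uses that $\Gamma$ on $\MOD(\D_G, B, \n, \lambda')$ is compatible with the residual right $P$-action (which commutes with the left $B$-action used up by localization) and with the central operators $\alpha_{\lo}(z)$, $z \in \Z(\lo)$, via Proposition \ref{equivariance condition number 4}; I would spell out that $\Gamma$ carries the operator $\alpha_{\lo}(z)$ to $\overline{\alpha}_{\lo}(z)$ on the bimodule side, so that condition $(\w 4)$ matches the local nilpotence of $\overline{\alpha}_{\lo}(z) - \chi_{\lo,\lambda}(z)$ in the definition of $\mmOD(\w{{\lambda}}, \uo, P, \Ug^{\lambda'})$. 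Once this dictionary between $\alpha_{\lo}$, $\overline{\alpha}_{\lo}$ and the $P$-equivariance data is made explicit, the rest is routine and the proof reduces to assembling the displayed chain of equivalences.
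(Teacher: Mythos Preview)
Your proposal is correct and follows essentially the same route as the paper: the proposition is stated there as a summary of the discussion immediately preceding it, which chains the lemma $\BGGcat_{\w\lambda}\cong\mmOD(\lambda',\n,B,\D_G,P,\uo,\w\lambda)$, the inversion on $G$ swapping left $B$- and right $P$-actions, and regular Beilinson--Bernstein localization at $\lambda'$ (either down to $\Ug^{\lambda'}$ or stopping at $\D^{\lambda'}_\B$). Your additional care in matching $\alpha_{\lo}$ with $\overline{\alpha}_{\lo}$ under $\Gamma$ is a useful elaboration of what the paper leaves implicit; just note that after inversion the right $B$-action is the \emph{ambient} structure placing you inside $\MOD(\D_G,B,\n,\lambda')$, while the \emph{extra} data cut out by \ref{equi BGG2} is the (now left) $P$-action with conditions $(1)$--$(3),(\w4)$.
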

Thus, by Lemma \ref{aNiceEquivalence}
\begin{cor}\label{singparcor} $\BGGcat_{\w{\lambda}} \cong {}_{{ \lambda'}} {\Hcal(\lo)}_{\w{\lambda}} $.
\end{cor}
Similarly, one shows that $\BGGcat_\lambda \cong  \mmOD({{\lambda}}, \uo, P, \Ug^{\lambda'}) \cong \mmOD({{\lambda}}, \uo, P,
\D^{\lambda'}_\B) \cong  {}_{{ \lambda'}} {\Hcal(\lo)}_{{\lambda}}$.
\begin{example}\label{Soergel's result example} Let $P=B$ and $\lambda \in \h^*$ be regular and dominant.
Then $\BGGcat_{\w{\lambda}} \cong \mmOD(\w{\lambda}, \n, B, \Ug^{\lambda'})$, which is the category of left
$\Ug^{\lambda'}$-modules which are locally finite over $\bo$ (so
the $\h$-action need not be semi-simple). This equivalence was
first established in \cite{Soe86}.
\end{example}
\begin{example}\label{example HC-bimodules} Let $P = G$ and $\lambda \in \h^*$ be
any weight. Since $\uo_G = 0$ we write for simplicity
$\MOD(\w{{\lambda}}, G, \Ug^{\lambda'}) :=
\MOD(\w{{\lambda}}, \uo_G, G, \Ug^{\lambda'})$. Put
$\BGGcat_{\w{\lambda + \Lambda}} := \oplus_{\mu \in \Lambda}
\BGGcat_{\w{\lambda + \mu}}$. Then we have
$$\BGGcat_{\w{\lambda}} \isoto \mmOD(\w{{\lambda}}, G,
\Ug^{\lambda'}) \hbox{ and } \BGGcat_{\w{\lambda +
\Lambda}} \isoto \mmOD(G, \Ug^{\lambda'}),$$
both given by $V \mapsto (\BGG_G \ot
V)^B$. Thus $\BGGcat_{\w{\lambda}} \cong {}_{\lambda'} \Hcal_{\w\lambda}$. See \cite{BerGel},  \cite{Soe86}.
\end{example}

\begin{rem}
$\mmOD(\w{{\lambda}}, \uo, P, \D^{\lambda'}_\B)$
will \textit{not} consist of holonomic $\D$-modules, unless $P = B$. For
instance, if $\lambda = - \rho$, $P = G$ and $\lambda' = 0$, then $\BGGcat_{\w{-\rho}}$ will consist of
direct sums of copies of the simple Verma module $M_{-\rho}$.
Corresponding to $M_{-\rho}$ is a non-holonomic submodule of the
$\D_\B$-module $\D_\B$ (see \ref{inj1map}).
\end{rem}

\section{Whittaker modules}\label{Whittaker modules} Let $f : \U(\n) \to \C$ be an algebra homomorphism,  $\Delta_f := \{\alpha \in \Delta;  f(X_\alpha) \neq 0\}$ and $J_f :=
\Ker f$.  Let $\wt{\N}_f := \wt{N}(\g)_f$ be the category of left $\U$-modules on which $J_f$ acts locally nilpotently
and let $\N_f$ be its subcategory of modules which are f.g. over $\U$. Objects of $\N_f$ are called Whittaker modules.
Replacing $\g$ by $\lo$ and $f$ by $f |_{\U(\n \cap \lo)}$ we get the category $\N_f(\lo)$.
For regular $f$, i.e. when $\Delta_f = \Delta$, it was studied by Kostant, \cite{K78};
he showed that $\N_f$ has the exceptionally simple description
\begin{equation}\label{Kostant}
\MOD(\Z) \isoto \N_f, \, M \mapsto M \ot_{\Z} \U/\U  \cdot J_f.
\end{equation}
In the other extreme, when $f = 0$, $\N_f$ is $\BGGcat$ with the $\h$-semi-simplicity condition dropped and it has the same simple objects as $\BGGcat$.

Our main result here is a new proof of Theorem \ref{MainWhittakerTheorem} of \cite{MS97}. It enables one to compute the characters of standard Whittaker modules by means of the Kazhdan-Lusztig conjectures.
(For non-integral weights they were computed in \cite{B97}.)

\smallskip

Throughout this section we assume $\lambda \in \h^*$ and $\Delta_P = \Delta_f = \Delta_\lambda$.
\subsection{Equivalence between a block of $\N_f$ and of singular $\BGGcat$}\label{Whittaker one}
Fix a charcater $f: \U(\n) \to \C$.
For $\mu \in \h^*$ we put
$${}_\mu \N_f := \{M \in \N_f; I_\mu M = 0\}, \ {}_{\w\mu} \N_f := \{M \in \N_f; \; I_\mu \hbox{ acts locally nilpotently on  $M$} \}.$$
(Categories ${}_\mu \wt{\N}_f$ and ${}_{\w\mu} \wt{\N}_f$ are similarly defined.)
Our aim is to prove

\begin{thm}\label{MainWhittakerTheorem} Assume that $\lambda, \lambda' \in \Lambda$ satisfies
$\Delta_f = \Delta_\lambda$ and that $\lambda'$ is regular dominant. Then $\BGGcat_{\w{\lambda}} \cong {}_{\lambda'} \N_f$.
\end{thm}
Before proving this we establish some preliminary results.
\begin{lem}\label{WhittakerAlemma} \textbf{i)} For each $\mu, \lambda \in \h^*$, $\mu$ dominant, such that $\W_\mu \subseteq \W_\lambda$,
${}_{\mu} \Hcal_{\w \lambda}$ identifies with a finite length subcategory
of $\BGG_{\w \lambda}$ which is non-zero iff $\lambda - \mu$ is integral (analogous statements hold with $\mu$ and/or $\lambda$
replaced by $\w\mu$ and/or $\w\lambda$).

 \textbf{ii)}   ${}_{\mu} \Hcal_{\w{-\rho}} \cong \mmOD(\C)$ and ${}_{\mu} \wt{\Hcal}_{\w{-\rho}} \cong \MOD(\C)$, for $\mu$ integral.

 \textbf{iii)}   $ \Hcal_{\Zfin}$ is a finite length category.
\end{lem}
\begin{proof} That ${}_{\mu} \Hcal_{\w\lambda} = 0$ if $\mu-\lambda$ is not integral is a consequence of the fact that any $G$-module is a sum of $G$-modules with
integral central characters.

On the other hand, let $\mu-\lambda$ be integral and $E$ be an irreducible $G$-module with extremal weight $\mu-\lambda$. For $M \in \Hcal_{\lambda}$ we have $E \ot M \in \Hcal_\lambda$, with respect to the diagonal left $\U$-action
and the right $\U$-action on  the second factor. Thus,
$T^\mu_\lambda M = pr_{\w\mu} (E \ot M) \in {}_{\w \mu} \Hcal_\lambda$.  (Similarly, with $\lambda$ replaced by $\w \lambda$.)

Now $\U^{\lambda} \in {}_{\lambda} \Hcal_{{\lambda}}$ with its natural bimodule structure. Since $\W_\mu \subseteq \W_\lambda$ it is known that $T^\mu_\lambda$ is faithful. Hence
we get $0 \neq T^{\mu}_{\lambda}(\U^{\lambda}) \in {}_{\w\mu} \Hcal_{\lambda}$. Thus, also ${}_{\mu} \Hcal_{\lambda}$ and  ${}_{\mu} \Hcal_{\w\lambda}$ are non-zero.
We have
$${}_{\mu} \Hcal_{\w\lambda} \cong \mmOD({\lambda}, G, \U^{\mu}) \overset{\Loc}{\longrightarrow}
\mmOD({\lambda}, G, \D_G, B, \mu)  \cong $$
$$\mmOD(\mu, B, \D_G, G, {\w\lambda}) \cong \mmOD^{\w\lambda} (\mu,B, \U)= \BGGcat_{\w\lambda}.$$
Since $\mu$ is dominant we have $\Gamma \circ \Loc = Id$.
Since  $\BGGcat_{\w \lambda}$ is a finite length category this implies ${}_\mu \Hcal_{\w \lambda}$ is dito as well.  This proves \textbf{i)}. Moreover,
the fact that $\BGG_{\w{-\rho}} \cong \mmOD(\C)$ now implies
${}_{\mu} {\Hcal}_{\w{-\rho}} \cong \mmOD(\C).$  A similar argument shows  ${}_{\mu} \wt{\Hcal}_{\w{-\rho}} \cong \MOD(\C)$. This proves \textbf{ii)}.

By \ref{center both sides},  $\Hcal_{\Zfin} = {}_{\Zfin} \Hcal_\Zfin$. Since  ${}_{\mu} \Hcal_{\lambda}$ is a finite length category for all $\mu, \lambda \in \h^*$ a devissage implies \textbf{iii)}.
\end{proof}

\begin{lem}\label{WhittakerClemma}  Let $\mu \in \Lambda$.
The functors $\Theta_\mu := (  \  )\ot_{\U(\n \cap \lo)} \C_f : {}_\mu \wt{\Hcal}(\lo,\lo)_{\w{\lambda}} \to {}_\mu {\wt{\N}(\lo)_f}$ and
 $\Theta_{\w\mu} :=  (  \  )\ot_{\U(\n \cap \lo)} \C_f : {}_{\w \mu} \wt{\Hcal}(\lo,\lo)_{\w{\lambda}} \to {}_{\w \mu} {\wt{\N}(\lo)_f}$ are equivalences of categories.
\end{lem}
\begin{proof}  This certainly holds for $\lo = \h$ and from that we immediately reduce to the case $\g = \lo$, $\Delta_f = \Delta$ and $\lambda = - \rho$.
We must then show that the functor
$$\Theta_{\mu}: {}_\mu \wt{\Hcal}_{\w{-\rho}} \to {}_\mu {\wt{\N}_f},  \ M \mapsto M \ot_{\U(\n)} \C_f,$$
is an equivalence of categories.
It follows from Kostant's equivalence \ref{Kostant} that  ${}_\mu {\wt{\N}_f}$ is equivalent to $\MOD(\C)$ (for all $\mu \in \h^*$).
By Lemma \ref{WhittakerAlemma} \textbf{ii)}  also  ${}_\mu \wt{\Hcal}_{\w{-\rho}} \cong \MOD(\C)$; hence it suffices to show that $\Theta_\mu$ takes simples to simples.
The $\Theta_\mu$'s commutes with translation functors,
so since $\U^{-\rho} \in {}_{-\rho} \Hcal_{\w{-\rho}}$ we get
$$\Theta_\mu T^\mu_{-\rho}(\U^{-\rho}) = T^{\mu}_{-\rho} \Theta_{-\rho}(\U^{-\rho}) = T^{\mu}_{-\rho}(U^{-\rho} \ot_{\U(\n)} \C_f).$$
By \cite{K78} the latter is simple. This implies both that  $T^\mu_{-\rho}(\U^{-\rho})$ is simple generator for $ {}_\mu \wt{\Hcal}_{\w{-\rho}}$ and that $\Theta_\mu$ takes simples to simples.  Thus $\Theta_\mu$ is an equivalence.

A devissage using Lemma \ref{WhittakerBlemma} now shows that $\Theta_{\w\mu}$ is an equivalence.
\end{proof}

\begin{lem}\label{WhittakerBlemma} Each $M \in  \wt{\Hcal}_{\w{-\rho}}$ which is countably generated as a left $\U$-module  is faithfully flat as a right $\U(\n)$-module.
\end{lem}
\begin{proof}
Assume first that $M$ is simple. Then it follows from Schur's lemma that $M \in  {}_{\mu} \Hcal_{\w {-\rho}}$, for some integral $\mu
\in \h^*$.  By Lemma \ref{WhittakerAlemma} we know that $ {}_{\mu} \Hcal_{\w{-\rho}} \cong \mmOD(\C)$. Hence, $M \cong T^{\mu}_{-\rho}(\U^{-\rho})$ as this is simple (and hence a simple generator for  ${}_{\mu} \Hcal_{\w{-\rho}} $)
by the proof of Lemma \ref{WhittakerClemma}.  By an adjunction argument $M$ is projective
as a right $\U^{-\rho}$-module. By Kostant's separation of variables theorem, \cite{K63}, $\U^{-\rho}$ is free over $\U(\n)$. Hence $M$ is projective over $\U(\n)$.

Assume now that $M \in   \Hcal_{\w{-\rho}}$  is finitely generated. By Lemma \ref{WhittakerAlemma}  $M$ has finite length and an induction
on its length shows that $M$ again is projective as a right $\U(\n)$-module.

For arbitrary $M$ choose a filtration  $M_0 \subseteq M_1 \subseteq M_2 \subseteq \ldots \subseteq M$ of finitely generated submodules. Put
$\overline{M_i} = M_i/M_{i-1}$. Since all $M_i$ and $\overline{M_i}$ are projective we get that $M_i \cong \oplus_{j \leq i} M_j$ and thus
$$M = \underrightarrow{\lim} \, M_i \cong \underrightarrow{\lim} \oplus_{j \leq i}  \overline{M_j} = \oplus_{i \in \mathbb{N}} \overline{M_i}$$
is projective, and therefore flat, as a right $\U(\n)$-module.

To see that
$M$ is faithful over $\U(\n)$, we observe that the above implies that $M$, as a right $\U(\n)$-module, is a direct sum of modules of the form $T^{\mu}_{-\rho}(\U^{-\rho})$,  so
it suffices to show that $T^{\mu}_{-\rho}(\U^{-\rho})$ is faithful over $\U(\n)$. Let $V \in \MOD(\U(\n))$ be non-zero. We have
$$
T^{\mu}_{-\rho}(U^{-\rho}) \ot_{\U(\n)} V  \cong T^{\mu}_{-\rho}(U^{-\rho} \ot_{\U(\n)} V) \neq 0,
$$
since $U^{-\rho} \ot_{\U(\n)} V  \neq 0$ and $T^{\mu}_{-\rho}$ is faithful (since $\W_{\mu} \subseteq \W_{-\rho}$).
\end{proof}

\begin{lem}\label{WhittakerDlemma} Let $\mu \in \Lambda$ and  $M \in {}_{\w\mu} {\N}_f$. Then $M = \oplus_{\nu \in \Lambda} pr_{\lo, \w{\nu}}   M$.
\end{lem}
\begin{proof}  Note that $M$ has a filtration $M_0 \subseteq M_1 \subseteq \ldots \subseteq M_n = M$
 such that each subquotient $\overline{M}_i := M_i/M_{i-1}$ is generated over $\U$ by a vector $v_i$ such that $J_f \cdot v_i = I_\mu \cdot v_i = 0$. Thus each  $\overline{M}_i$ is a quotient of a sum of copies of  $\U^\mu/\U^\mu \cdot J_f$
 and by \cite{MS97} the latter has a filtration with subquotients of the form $\U^{\mu}/\U^\mu(I_{\lo, w \cdot \mu} + J_f)$, $w \in \W$.
 These are in turn quotients of  $\U^{\mu}/\U^\mu \cdot I_{\lo, w \cdot \mu}$.
 Thus, it is enough to prove that
$$
 \U^{\mu}/\U^\mu \cdot I_{\lo, w \cdot \mu}  =   \oplus_{\nu \in \Lambda} pr_{\lo, \w{\nu}}   \U^{\mu}/\U^\mu \cdot I_{\lo, w \cdot \mu} , \ w \in \W.
$$
Since ${}_{\w \nu} \Hcal(\lo,\lo)_{w \cdot \mu} = 0$, for $\nu \notin w \cdot \mu + \Lambda = \Lambda$,  and since  $\U^{\mu}/\U^\mu \cdot I_{\lo, w \cdot \mu} \in  \wt{\Hcal}(\lo,\lo)_{w \cdot \mu}  =  {}_{\Z(\lo)\text{-}\fin}\wt{\Hcal}(\lo,\lo)_{w \cdot \mu}$
we are done.
\end{proof}

\begin{proof}[Proof of Theorem \ref{MainWhittakerTheorem}] We have $\BGGcat_{\w \lambda} \cong  {}_{\lambda'} \Hcal(\lo)_{\w \lambda}$, so we need to construct an equivalence
\begin{equation} \Theta:  {}_{\lambda'} \Hcal(\lo)_{ \w \lambda} \isoto {}_{\lambda'} \N_f, \ M \mapsto M \ot_{\U(\n \cap \lo)} \C_f.
\end{equation}
Consider the restriction functor $res: {}_{\lambda'} \Hcal(\lo)_{\w \lambda} \to \wt{\Hcal}(\lo,\lo)_{\w \lambda}$.
A ``reductive version" of Lemma \ref{WhittakerBlemma} applied to $\lo$ shows that each object of $\Hcal(\lo,\lo)_{\w \lambda}$ is faithfully flat as a right $\U(\n \cap \lo)$-module. Hence, $\Theta$ is faithful and exact.

Denote by $\Psi$ the right adjoint of $\Theta$. Thus
$$\Psi V = \Hom_\C(  \underleftarrow{\lim}_i   \U(\lo)/(I_{\lo,\lambda})^i \ot_{\U(\n \cap \lo)} \C_f,  V)^{\lo\text{-}\integ},$$ where  $( \ )^{\lo\text{-}\integ}$
is the functor that assigns a maximal $\lo$-integrable sub-object. (The left $\U$-module structure on $\Psi V$ comes from the left $\U$-action on $V$ and its right
$\U(\lo)$-module structure comes from the left $\U(\lo)$-action on $ \underleftarrow{\lim}_i   \U(\lo)/(I_{\lo,\lambda})^i \ot_{\U(\n \cap \lo)} \C_f$.)

In order to prove that $\Theta$ is an equivalence its enough to show that the natural transformation
$\Theta \circ \Psi \to Id$ is an isomorphism. Take $V \in {}_{\lambda'}\N_f$ and put  $$K := \Ker\{\Theta\Psi V \to V\}, \ C := \Coker\{\Theta\Psi V \to V \}.$$
By Lemma \ref{WhittakerDlemma} we have  $K = \oplus_{\nu \in \Lambda} pr_{\lo, \w{\nu}}   K$ and  $C = \oplus_{\nu \in \Lambda} pr_{\lo, \w{\nu}}   C$. Let $\Psi_{\w\nu}$ be the right adjoint of the functor
$\Theta_{\w\nu}$ from Lemma \ref{WhittakerClemma}. Note that $pr_{\lo, \w \nu} V \in {}_{\w \nu}\wt{\N}(\lo)_f$ and that
$pr_{\lo, \w\nu} K =  \Ker\{\Theta_{\w\nu} \Psi_{\w\nu}  pr_{\lo, \w\nu} V \to pr_{\lo, \w\nu}  V\}$ and
 $pr_{\lo, \w\nu} C =  \Coker\{\Theta_{\w\nu} \Psi_{\w\nu}  pr_{\lo, \w\nu} V \to pr_{\lo, \w\nu}  V\}$.

Assume $\nu \in  \Lambda$. Then
$\Theta_{\w\nu}$ is an equivalence of categories, by Lemma \ref{WhittakerClemma}, and hence we have $pr_{\lo, \w\nu} K = pr_{\lo, \w\nu} C = 0$. Thus $K = C = 0$,  by  Lemma \ref{WhittakerDlemma},  and consequently $\Theta$ is an equivalence.
 \end{proof}
\subsection{Singular and parabolic case.}\label{Whittaker two}  Let $Q \subseteq G$ be a parabolic,  $\qo := Lie \, Q$, $\Qo := G/Q$ and  $I^\qo := \Ker\{\U \to \D(G/Q)\}$.
It is known that $I^{\qo} = \Ann_{\U} (\U \ot_{\U(\qo)} \C)$, $\U/I^{\qo} \isoto \D(\Qo)$, and there is a parabolic version of (regular) Beilinson-Bernstein localization:
$\MOD(\D_G,Q,\qo) \cong \MOD(\D(\Qo))$, \cite{BorBr82}. Let $\BGGcat^\qo := \{M \in \BGGcat; \qo \hbox{ acts locally finitely on } M\}$ be $\qo$-parabolic category $\BGG$,
${\BGGcat}^{\qo}_\lambda := {\BGGcat}^{\qo} \cap {\BGGcat}_\lambda$ and
${\BGGcat}^{\qo}_{\w \lambda} := {\BGGcat}^{\qo} \cap {\BGGcat}_{\w\lambda}$.

All results from Section \ref{section O} extend to these categories. We assume here for simplicity that $\lambda$ is integral and so we can take  $\lambda' := 0$. Then
\begin{equation}\label{parblock one}
{\BGGcat}^{\qo}_\lambda = \mmOD(\qo, Q,  \Ul), \  {\BGGcat}^{\qo}_{\w\lambda} = \mmOD^{\w\lambda}(\qo,Q, \U).
\end{equation}
Like before we get (with self-explaining notations)  $${\BGGcat}^{\qo}_{\w\lambda} \cong \mmOD(\qo, Q, \D_G, P, \uo_P, \w\lambda) \cong$$
$$\mmOD(\w{\lambda},\uo_\p ,P,\D_G, Q, \qo) \cong \mmOD(\w{\lambda},\uo_\p ,P,\D(\Qo)) \cong \Hcal(\D(\Qo), \lo_P)_{\w\lambda}.
$$
Here $\Hcal(\D(\Qo), \lo_P)_{\w\lambda}$ is the category of $\D(\Qo)\text{-}\U(\lo_P)$-bimodules on which the adjoint
$\lo_P$-action is integrable, $I_{\lo,\lambda}$ acts locally nilpotently from the right and $\uo_P$ acts locally nilpotently from the left. Let $\N^{\qo}_f := \{M \in \N_f;  I^{\qo} M = 0\}$.
Thus the equivalence of Theorem \ref{MainWhittakerTheorem}
induces an equivalence
\begin{cor}\label{parabolic Whittaker cor}  (\cite{W09}.) ${\BGGcat}^{\qo}_{\w\lambda} \cong \N^{\qo}_f$.
\end{cor}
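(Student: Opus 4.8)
The plan is to show that the equivalence of Theorem~\ref{MainWhittakerTheorem} restricts to the parabolic subcategories. Since $\lambda$ is integral we may take $\lambda' := 0$, which is regular and dominant, so that Theorem~\ref{MainWhittakerTheorem} together with (the $\lambda'=0$ case of) Corollary~\ref{singparcor} reads $\BGGcat_{\w\lambda} \cong {}_0\Hcal(\lo)_{\w\lambda} \cong {}_0\N_f$, the second equivalence being $\Theta\colon M \mapsto M\ot_{\U(\n\cap\lo)}\C_f$ from the proof of Theorem~\ref{MainWhittakerTheorem}, with quasi-inverse its right adjoint $\Psi$. First I would record, exactly as in the chain of equivalences preceding the corollary, that $\BGGcat^{\qo}_{\w\lambda} \cong \Hcal(\D(\Qo),\lo_P)_{\w\lambda}$; since $\U\to\D(\Qo)$ is surjective with $\ad\g$-stable kernel $I^{\qo}$, a $\D(\Qo)\text{-}\U(\lo_P)$-bimodule is the same thing as a $\U\text{-}\U(\lo_P)$-bimodule on which $I^{\qo}$ acts as zero from the left, so $\Hcal(\D(\Qo),\lo_P)_{\w\lambda}$ is precisely the full subcategory $\{M\in{}_0\Hcal(\lo)_{\w\lambda};\, I^{\qo}M=0\}$ of ${}_0\Hcal(\lo)_{\w\lambda}$. (Here one uses $I_0\subseteq I^{\qo}$, because $\Z$ acts on the parabolic Verma $\U\ot_{\U(\qo)}\C$, which $I^{\qo}$ annihilates, through $\chi_0$; the same observation also gives $\N^{\qo}_f = \{V\in{}_0\N_f;\, I^{\qo}V=0\}$, so that the subscript ${}_{\lambda'}$ in Theorem~\ref{MainWhittakerTheorem} is automatic on the Whittaker side.)

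Next I would check that $\Theta$ and $\Psi$ preserve the condition ``$I^{\qo}$ acts as zero from the left''. The module $\Theta(M) = M\ot_{\U(\n\cap\lo)}\C_f$ is a quotient of $M$ as a left $\U$-module, the kernel being the left $\U$-submodule generated by the elements $m\cdot x - f(x)m$ with $x\in\n\cap\lo$, which is a left submodule since the left $\U$-action and the right $\U(\lo)$-action commute; hence $I^{\qo}M=0$ forces $I^{\qo}\Theta(M)=0$. Dually, the left $\U$-action on $\Psi V$ is post-composition with the left $\U$-action on $V$, so $I^{\qo}V=0$ forces $I^{\qo}\Psi V=0$. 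Therefore $\Theta$ and $\Psi$ restrict to mutually inverse equivalences between $\{M\in{}_0\Hcal(\lo)_{\w\lambda};\, I^{\qo}M=0\}$ and $\{V\in{}_0\N_f;\, I^{\qo}V=0\}$, and composing this with the two identifications of the previous paragraph yields $\BGGcat^{\qo}_{\w\lambda}\cong\N^{\qo}_f$.

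The genuinely substantive external input is the parabolic extension of Section~\ref{section O} that underlies $\BGGcat^{\qo}_{\w\lambda}\cong\Hcal(\D(\Qo),\lo_P)_{\w\lambda}$, which in turn rests on $\U/I^{\qo}\cong\D(\Qo)$ and parabolic Beilinson--Bernstein localization, \cite{BorBr82}; granting that, the only point inside the present argument that requires care is the compatibility in the second paragraph --- namely that $\Theta$ and $\Psi$ modify only the right $\U(\lo_P)$-module structure and leave the left $\U$-module structure intact up to passing to sub- and quotient-modules, so that annihilation by $I^{\qo}$ from the left is respected on both sides. Everything else is formal.
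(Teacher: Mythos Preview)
Your proposal is correct and follows exactly the approach the paper intends: the paper's entire proof is the single clause ``Thus the equivalence of Theorem~\ref{MainWhittakerTheorem} induces an equivalence'', and you have correctly unpacked what ``induces'' means here --- namely that the equivalence $\Theta\colon {}_0\Hcal(\lo)_{\w\lambda}\isoto{}_0\N_f$ and its inverse $\Psi$ only touch the right $\U(\lo)$-structure and hence respect the full subcategories cut out by the left-ideal condition $I^{\qo}(\ )=0$, while the chain of bi-equivariant equivalences preceding the corollary identifies $\BGGcat^{\qo}_{\w\lambda}$ with precisely that subcategory on the Harish-Chandra side. Your observation that $I_0\subseteq I^{\qo}$ makes the central-character conditions automatic is also the right bookkeeping.
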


\end{document}